\documentclass[11pt, reqno]{amsart}

\usepackage{amssymb}
\usepackage{graphicx}
\usepackage{hyperref}
\usepackage{color, url}
\usepackage{enumitem}
\usepackage[all]{xy}

\newcommand{\Q}{\mathbb{Q}}
\newcommand{\Z}{\mathbb{Z}}

\newcommand{\OO}{\mathcal{O}}

\newcommand{\Ker}{\operatorname{Ker}}
\newcommand{\Coker}{\operatorname{Coker}}
\newcommand{\rank}{\operatorname{rank}}

\newcommand{\orb}{\mathrm{orb}}
\newcommand{\ab}{\mathrm{ab}}

\newcommand{\Int}{\operatorname{Int}}

\newtheorem{theorem}{Theorem}[section]
\newtheorem{proposition}[theorem]{Proposition}
\newtheorem{corollary}[theorem]{Corollary}
\newtheorem{lemma}[theorem]{Lemma}

\newtheorem{question}[theorem]{Question}
\theoremstyle{definition}
\newtheorem{definition}[theorem]{Definition}

\theoremstyle{remark}
\newtheorem{remark}[theorem]{Remark}

\begin{document}

\title[]{A ribbon knot which is not a symmetric union}

\author{Michel Boileau}
\address{Aix Marseille University, CNRS, UMR 7373 (I2M) \\
3 Pl. Victor Hugo, 13003 Marseille \\
France}
\email{michel.boileau@univ-amu.fr}

\author{Teruaki Kitano}
\address{Department of Information Systems Science, Faculty of Science and Engineering, Soka University \\
Tangi-cho 1-236, Hachioji, Tokyo 192-8577 \\
Japan}
\email{kitano@soka.ac.jp}

\author{Yuta Nozaki}
\address{
Department of Mathematics, Faculty of Science, Hokkaido University \\
Sapporo 060-0810 \\
Japan\vspace{-0.6em}}
\address{
International Institute for Sustainability with Knotted Chiral Meta Matter (WPI-SKCM$^2$), Hiroshima University \\
1-3-1 Kagamiyama, Higashi-Hiroshima, Hiroshima 739-8531 \\
Japan}
\email{nozaki@math.sci.hokudai.ac.jp}

\subjclass[2020]{Primary 57K10, 57M12, Secondary 57R18, 57K30}

\keywords{ribbon knot, symmetric union, Montesinos knot, Seifert manifold, $2$-fold branched cover}

\maketitle

\begin{abstract}
A basic open question motivated by the study of ribbon knot diagrams asks whether every ribbon knot can be presented as a symmetric union. 
In this article, we give a negative answer to this question by exhibiting a ribbon Montesinos knot which does not admit a symmetric union presentation.
\end{abstract}

\setcounter{tocdepth}{1}
\tableofcontents

\section{Introduction}
\label{sec:intro}
The symmetric union construction, introduced by Kinoshita and Terasaka in \cite{KiTe57} and extended by Lamm~\cite{Lam00}, generalizes the connected sum of a knot with its mirror image. 
This construction provides a geometric framework for generating ribbon knots.
Prime ribbon knots up to $10$ crossings and ribbon $2$-bridge knots admit symmetric union presentations (see \cite{Lam00, Lam21JKTR}).
This is true also for all but $15$ prime ribbon knots with $11$ and $12$ crossings \cite{Lam21EM}.
Whether its converse holds is a basic open question in the study of ribbon knots, as it would provide a diagrammatic characterization of being ribbon.
See \cite[Question~5.3]{Lam00}, \cite[Conjecture 5.2]{Lam21JKTR}, \cite[Conjecture]{BeFe24}, and most recently \cite[Problem~1.51]{K3}.

\begin{question}\label{ques:symunion}
Does every ribbon knot admit a symmetric union presentation?
\end{question}

This question is particularly challenging as there are currently no known obstructions to a ribbon knot admitting a symmetric union presentation.
Furthermore, there is no a priori upper bound on the number of twist regions required for a symmetric union presentation of a given ribbon knot. 
In this article, we give a negative answer to this question by exhibiting a ribbon Montesinos knot, with five rational tangles, which does not admit a symmetric union presentation.
Unlike the candidates previously considered which were satellite knots (see \cite[Problem~1.51]{K3}), this knot is hyperbolic. 
This example suggests that the class of ribbon knots should be subdivided into two distinct types according to whether a ribbon knot admits a symmetric union or not.

For a knot $K \subset S^3$, let $\Sigma_2(K)$ denote the $2$-fold branched cover of $S^3$ branched over $K$. 
For $\alpha \geq 3$ odd and coprime to $\beta \neq 0$, let $\frak{b}(\alpha, \beta)$ denote the $2$-bridge knot whose $2$-fold branched cover is the lens space $L(\alpha, \beta)$.

\begin{theorem}\label{thm:notsymmetricunion} 
The Montesinos knot $K = K(\frac{1}{3}, -\frac{1}{3}, \frac{1}{3}, -\frac{1}{3}, \frac{9}{2})$ is a ribbon knot which does not admit a symmetric union presentation.
\end{theorem}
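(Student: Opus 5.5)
The plan has two parts: ribbonness, which should be routine, and the obstruction to a symmetric union presentation, which is the substance.

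\emph{Ribbonness.} I would exhibit an explicit ribbon move, i.e.\ a band move turning $K$ into a two-component unlink. The pairs of tangles $\frac13, -\frac13$ cancel in pairs, which suggests a band joining across a $\frac13,-\frac13$ pair. The intermediate step should be that $\Sigma_2(K)$ is, after cutting, related to the $2$-bridge knot $\frak{b}(9,2)$, which is ribbon. Alternatively, I would check that $\Sigma_2(K)$ bounds a rational homology ball via a Lisca-type plumbing argument, and then upgrade this to an actual ribbon disk by drawing the band.

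\emph{Obstruction.} Suppose $K=D\cup D^*(n_1,\dots,n_k)$ is a symmetric union with partial knot $J$. The reflection-folding map sends the knot group $\pi_1(S^3\setminus K)$ onto $\pi_1(S^3\setminus J)$, sending meridians to meridians. The reason is that twists on the axis become trivial after folding, since both strands map to the same meridian. Killing squares of meridians gives an epimorphism of $\pi_1$-orbifold groups $\pi_1^{\orb}(S^3,K;2)\to\pi_1^{\orb}(S^3,J;2)$, hence an epimorphism $\pi_1\Sigma_2(K)\to\pi_1\Sigma_2(J)$. Lamm's determinant formula gives $\det K=\det(J)^2$ (up to sign), which pins down $\det J$; here $|H_1(\Sigma_2(K))|$ is computed directly from the Seifert invariants.

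\emph{Using the Seifert structure.} $\Sigma_2(K)$ is the Seifert manifold over $S^2(3,3,3,3,9)$ with the given invariants. I would show that its $\pi_1$ admits epimorphisms only onto a very restricted list of $3$-manifold groups. Since $J$ is a knot, $\Sigma_2(J)$ is a $\Z/2$-homology sphere. An epimorphism from a Seifert fibered group with infinite center, with $\pi_1\Sigma_2(J)$ nontrivial, forces $\Sigma_2(J)$ to be Seifert (or a lens space) with base orbifold a quotient of $S^2(3,3,3,3,9)$. That is, its cone points have orders dividing a sub-collection of $3,3,3,3,9$, via the standard horizontal/vertical dichotomy for maps between Seifert manifolds (non-zero degree not needed, only the epimorphism onto the orbifold group of the base). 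The equivariant version via the orbifold groups should also force $J$ to be a Montesinos or $2$-bridge knot built from a subcollection of the tangles. The candidates are then $J$ with $\det J=\sqrt{\det K}$ among knots like $\frak{b}(9,\beta)$, $\frak{b}(3,1)$, and Montesinos knots $K(\pm\frac13,\dots,\frac{\beta}{9})$.

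\emph{Eliminating candidates.} The final step rules out each candidate. For each one I would check, by counting or with twisted Alexander/character-variety type invariants (e.g.\ $H_1$ compatibility of the induced map on $H_1(\Sigma_2)$, or the Alexander polynomial divisibility $\Delta_J\mid\Delta_K$ imposed by the meridian-preserving epimorphism), that no meridian-preserving epimorphism exists. I expect the main obstacle to be the middle step: controlling \emph{all} possible partial knots $J$ from the single epimorphism, since a priori $J$ is arbitrary. Here the orbifold/Seifert rigidity is the key input, and I would try first to reduce to $\pi_1^{\orb}$ of the base $2$-orbifold $S^2(3,3,3,3,9)$ mapping onto a triangle or smaller orbifold group.
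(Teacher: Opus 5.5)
Your ribbon argument uses the paper's idea: a band across a $\pm\frac13$ pair splits off an unknot (Lemma~\ref{lem:ribbon}). But one band does not give a two-component unlink. You must do this for both pairs and then use that $K(\frac92)=\mathfrak{b}(9,2)=6_1$ is ribbon. The ``rational ball, then upgrade'' route is not an argument.

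The obstruction part has genuine gaps. First, the input data are wrong. The tangle $\frac92$ has $\alpha=2$, so $\Sigma_2(K)$ is Seifert over $S^2(2,3,3,3,3)$, not $S^2(3,3,3,3,9)$, with $\det K=729$ and $\det J=27$. Your candidates ($\mathfrak{b}(9,\beta)$, $\mathfrak{b}(3,1)$, tangles $\beta/9$) do not have determinant $27$. The cone point of order $2$ is exactly what drives the paper's final contradiction. Second, the folding map does not give a meridian-preserving epimorphism of knot groups in general. For example, $6_1$ is a symmetric union with partial knot $3_1$, yet $t^2-t+1\nmid 2t^2-5t+2$. Only the $\pi$-orbifold version survives, hence $\pi_1\Sigma_2(K)\twoheadrightarrow\pi_1\Sigma_2(J)$. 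So your Alexander-divisibility test is unsound.

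The decisive gap is the middle step. You claim that an epimorphism forces $\Sigma_2(J)$ to be Seifert with cone orders dividing a sub-collection of those of $\Sigma_2(K)$, ``non-zero degree not needed''. This is neither proved nor true.

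If the fiber is killed, the map factors through $\pi_1^{\orb}(S^2(2,3,3,3,3))$, and the target could a priori be a connected sum of spherical manifolds. The paper excludes this in Lemma~\ref{lemm:nonaspherical}. It uses the rank bound of Proposition~\ref{prop:rank} with $H_1(\Sigma_2(K);\Z)\cong(\Z/3\Z)^2\oplus\Z/81\Z$, which leaves only $\Z/3\Z\oplus\Z/9\Z$ or $(\Z/3\Z)^3$ for $H_1(\Sigma_2(J);\Z)$. It then adds the orbifold theorem and a Hopfian argument.

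If the fiber survives, an epimorphism still does not control the base of the target. Pinch the piece of $\Sigma_2(K)$ over a disk containing the cone points $2,3$ onto a solid torus whose meridian is that piece's rational longitude $6\partial+29h$. This gives a degree-one, hence $\pi_1$-surjective, map onto a Seifert manifold over $S^2(3,3,3,6)$. That target has $|H_1|\neq 27$, so it is not itself a candidate, but it refutes the principle you invoke.

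What makes the paper's argument work is precisely that the folding map has degree \emph{zero} (Proposition~\ref{prop:orbifold}). After a fiber-preserving homotopy, Kneser's theorem makes it miss a regular fiber. The vertical annuli cutting the rest of $\Sigma_2(J)$ into solid tori then pull back to incompressible vertical tori. Each such torus splits the cone points as $\{2,3\}\,|\,\{3,3,3\}$ or $\{3,3\}\,|\,\{2,3,3\}$. On the side whose filling along the section has base $S^2(2,3)$ or $S^2(2,3,3)$, the map factors through a finite group, although its image contains $h_N^d$ of infinite order.

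The paper never identifies $J$. Your enumerate-and-eliminate plan has no valid candidate list and no working elimination invariant.
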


\begin{remark}
It is true also for any other Montesinos knot obtained from $K$ by permuting the rational tangles.
More precisely, $K(\frac{1}{3}, \frac{1}{3}, -\frac{1}{3}, -\frac{1}{3}, \frac{9}{2})$, $K(\frac{1}{3}, -\frac{1}{3}, -\frac{1}{3}, \frac{1}{3}, \frac{9}{2})$, $K(-\frac{1}{3}, \frac{1}{3}, \frac{1}{3}, -\frac{1}{3}, \frac{9}{2})$, and their mirror images.
\end{remark}

\begin{remark} 
In her thesis, K\"{o}se~\cite[Theorem~E]{Kos22Thesis} used the $2$-fold branched cover to characterize Montesinos knots which admit symmetric union presentations with a single twist region. 
Her result implies that the Montesinos knot $K(\frac{1}{3}, -\frac{1}{3}, \frac{1}{3}, -\frac{1}{3}, \frac{9}{2})$ cannot admit such a symmetric union presentation. 
The proof crucially uses the fact that there is a single twist region.
\end{remark}

Throughout this paper, rational tangles are defined by \cite[Figure~12.10(b)]{BZH14}, and $K(\frac{\beta_1}{\alpha_1}, \dots, \frac{\beta_r}{\alpha_r})$ coincides with $\mathfrak{m}(0; \frac{\alpha_1}{\beta_1}, \dots, \frac{\alpha_r}{\beta_r})$ in \cite[Chapter~12.D]{BZH14} if $|\alpha_i| > |\beta_i|$.

The proof relies on a detailed analysis of the fundamental groups of the $2$-fold branched covers. 
Specifically, we exploit a necessary condition on the epimorphism from the fundamental group of the $2$-fold branched cover of a knot with a symmetric union presentation onto that of its partial knot. 

\begin{figure}[h]
 \centering \includegraphics[width=\textwidth]{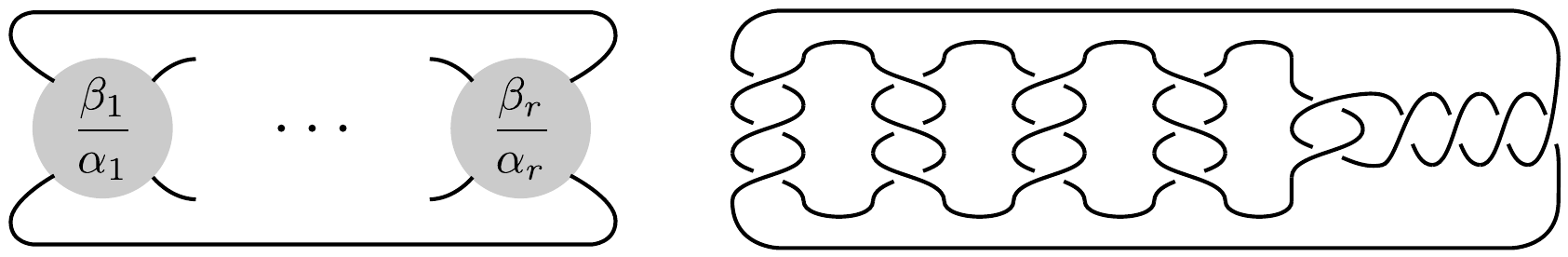}
 \caption{Montesinos knot $K(\frac{\beta_1}{\alpha_1}, \dots, \frac{\beta_r}{\alpha_r})$ and its example $K(\frac{1}{3}, -\frac{1}{3}, \frac{1}{3}, -\frac{1}{3}, \frac{9}{2})$.}
 \label{fig:Montesinos_ex}
\end{figure}

We have $\det(K) = 729$ for our example, so the following question remains open.

\begin{question}\label{ques:det1}
Does every ribbon knot $K$ with $\det(K) = 1$ admit a symmetric union presentation?
\end{question}

Note that there exist infinitely many ribbon knots $K$ with $\det(K) = 1$, for example, by \cite[Theorem~2.6]{Lam00}.

\subsection*{Acknowledgments}
This study was supported in part by JSPS KAKENHI Grant Numbers 23K12974 and 26K06793.

\section{Symmetric unions}
\label{sec:symmetric_union}

In this section, we recall the definition of the symmetric union construction.

\begin{definition}
\label{def:symmetric_union}
Let $D$ be an unoriented planar diagram of a knot $K_D$ and let $D^*$ be the diagram obtained from $D$ by reflecting $D$ across an axis $\Delta$ in the plane.
Let $B_0, B_1,\dots, B_k$ be balls along the axis $\Delta$, each of which is invariant by the reflection $\rho$ through $\Delta$ and intersects $D$ in a trivial arc.
One replaces the trivial tangle $(B_0, B_0 \cap (D \cup D^*))$ by a $\infty$-tangle to get the connected sum of the diagrams $D$ and $D^*$.
For $1\leq i \leq k$, one replaces each trivial tangle $(B_i, B_i \cap (D\sharp D^*))$ by a $n_i$-tangle, where $n_i \in \Z$.
The knot diagram $(D \cup D^*)(\infty, n_1, \dots, n_k)$ obtained from $D\cup D^*$ in this way is called a \emph{symmetric union} of the diagram $D$ and $D^*$.
A knot which admits a diagram $(D \cup D^*)(\infty, n_1, \dots, n_k)$ is said to admit a \emph{symmetric union presentation} with \emph{partial knot} $K_D$, where $K_D$ corresponds to the closure of the diagram $D$ such that $(D \cup D^*)(0, 0, \dots, 0) = K_D \cup K_{D^*}$.
See Figure~\ref{fig:symmetric_union}.
\end{definition}

\begin{figure}[h]
 \centering \includegraphics[width=\textwidth]{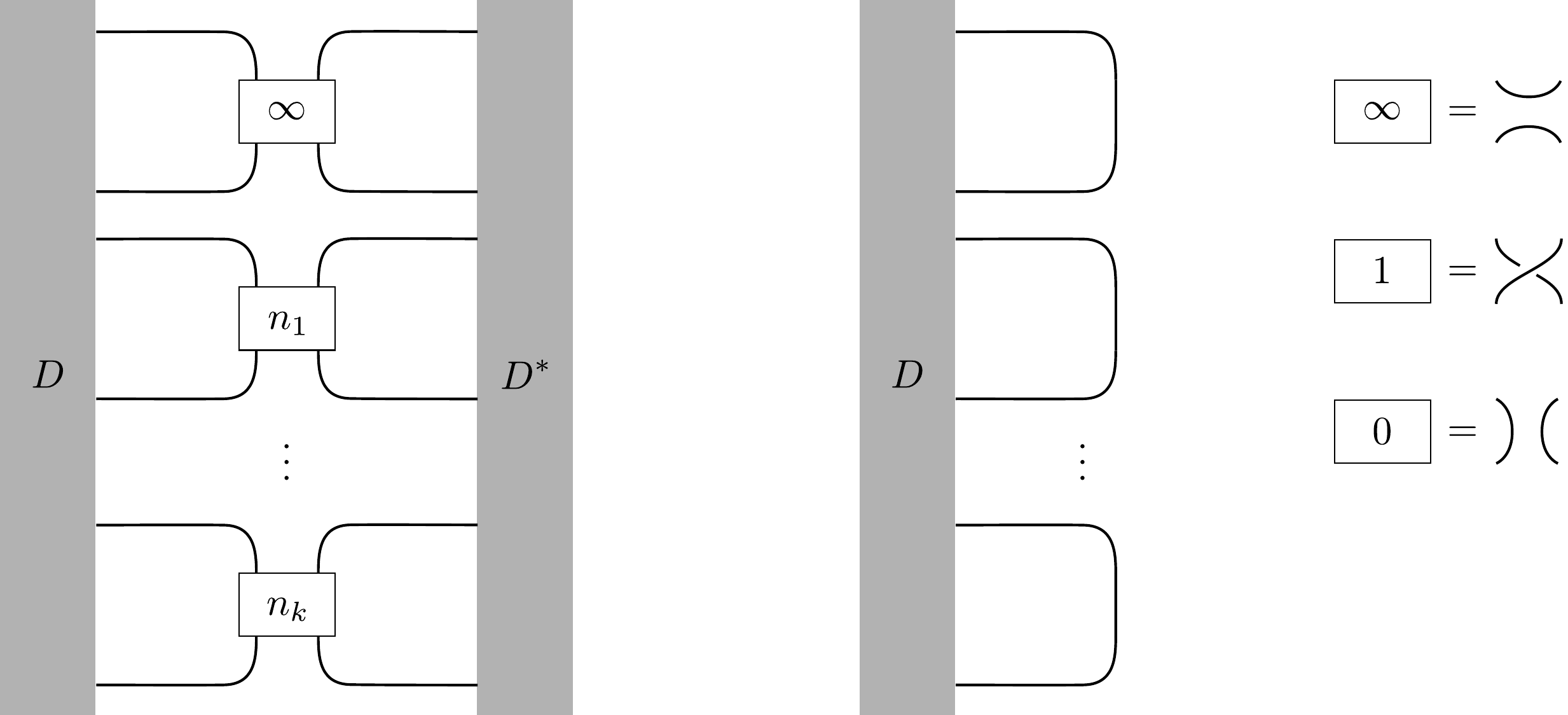}
 \caption{Symmetric union $(D \cup D^*)(\infty, n_1, \dots, n_k)$ and its partial knot.}
 \label{fig:symmetric_union}
\end{figure}

A key result for the study of symmetric unions is the existence of an epimorphism between the $\pi$-orbifold groups of a symmetric union and of its partial knot.
See \cite[Theorem~3.3]{Lam00} and also \cite{BKN25, BKN26}. 
This epimorphism lifts to an epimorphism between the fundamental groups of the $2$-fold branched covers of the symmetric union and of its partial knot. 
The next proposition gives a topological construction of this lift, which shows that it is induced by a map of degree zero between the $2$-fold branched covers. 
This last property is needed for the proof of Theorem~\ref{thm:notsymmetricunion}.

\begin{proposition}\label{prop:orbifold}
Let $K$ be a symmetric union with partial knot $K_D$. 
Then there is a $\pi_1$-surjective map $f \colon \Sigma_2(K) \to \Sigma_2(K_D)$ of degree zero.
\end{proposition}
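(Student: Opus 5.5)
We construct $f$ geometrically from the reflection symmetry, in three steps: quotient by the reflection, use a ``folding'' map downstairs, and lift everything to $2$-fold branched covers.

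\textbf{Setup.} Place the symmetric union diagram so that the axis $\Delta$ is the trace of a plane $P\subset S^3$. Reflection $\rho$ in $P$ preserves $D\cup D^*$ away from the twist balls $B_0,\dots,B_k$. Each $B_i$ is $\rho$-invariant and meets $K$ in a rational $n_i$-tangle (or the $\infty$-tangle for $B_0$). Let $H\subset S^3$ be the closed half-space containing $D$. Then $H\cap (K \setminus \bigcup_i B_i)$ is a copy of $D$ minus small arcs.

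\textbf{The folding map.} Define a map of pairs $F\colon (S^3,K)\to (S^3,K_D)$ as follows.
\begin{enumerate}[label=(\roman*)]
\item Outside $\bigcup_i B_i$, set $F=\mathrm{id}$ on $H$ and $F=\rho$ on $\rho(H)$. Composing with a homeomorphism $H\cong S^3$ that collapses $\partial H$ suitably, this sends $K$ outside the balls onto $K_D$ minus the trivial arcs.
\item Inside each $B_i$, the $n_i$-tangle is $\rho$-equivariantly a twisted pair of arcs, and $B_i/\rho$ is a ball containing a single trivial arc. So the restriction of the quotient map $S^3\to S^3/\rho\cong H$ to $B_i$ carries the tangle onto the trivial arc of $K_D$ in $B_i\cap H$, up to an isotopy that untwists. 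For the $\infty$-tangle in $B_0$, the two arcs fold onto the arc of $K_D$ that closes $D$.
\end{enumerate}
The key requirement is that $F^{-1}(K_D)=K$, at least near $K$, and that the local branching indices are compatible. This ensures that $F$ lifts to a map $f\colon \Sigma_2(K)\to\Sigma_2(K_D)$ of $2$-fold branched covers. The lift exists because the composite $\pi_1(S^3\setminus K)\to\pi_1(S^3\setminus K_D)\to\Z/2$ sends meridians to $1$: $F$ sends meridians of $K$ to meridians of $K_D$ (or their inverses), since $\rho$ reverses orientation but meridians are unoriented mod $2$.

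\textbf{Degree.} $F$ factors through the quotient by the orientation-reversing involution $\rho$. A generic point of $H$ has two preimages, one in $H$ and one in $\rho(H)$, with opposite local orientations, so $\deg F=0$. The same cancellation holds upstairs: a generic point of $\Sigma_2(K_D)$ has preimages coming in $\tilde\rho$-paired couples of opposite sign. Hence $\deg f=0$.

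\textbf{Surjectivity.} The restriction $F|_{H}$ is essentially the identity onto $(S^3,K_D)$ minus the balls. Its preimage in $\Sigma_2(K)$ is connected, as it contains lifts of meridians, and it maps homeomorphically onto the complement of the lifted balls in $\Sigma_2(K_D)$. Each lifted ball in $\Sigma_2(K_D)$ is a solid torus, namely the double cover of a ball branched along a trivial arc. Using van Kampen, $\pi_1(\Sigma_2(K_D))$ is a quotient of $\pi_1$ of the complement of these solid tori. That complement group is hit via $f$, so $f$ is $\pi_1$-surjective. Alternatively, surjectivity can be deduced from Lam00's epimorphism on $\pi$-orbifold groups, which are the index-$2$ subgroups corresponding to the branched covers.

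\textbf{Main obstacle.} The hardest step is making $F$ continuous and well behaved along $P$ and inside the twist balls. We need a $\rho$-equivariant model in which the quotient map restricted to each $B_i$ sends the twisted tangle onto a trivial arc with the correct preimage condition. I would first handle this by taking $F$ to be the quotient map by $\rho$ itself, $S^3\to S^3/\rho\cong S^3$. I would then show that $K/\rho$, after the twist-ball modifications, is exactly $K_D$: in each ball the $n_i$-twisted tangle is symmetric and its quotient is an arc whose endpoints lie on the fixed plane. Some care is needed where $K$ meets $P$ in fixed points, since there the branch locus of the quotient interacts with $K$. I expect this to be handled by a local model in each $B_i$.
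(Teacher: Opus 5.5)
There is a genuine gap, and it is exactly at what you call the ``main obstacle'': the twist balls $B_1,\dots,B_k$. Step (ii) and your proposed fix both assume that the $n_i$-tangle is $\rho$-symmetric, and it is not. The reflection through $P$ keeps over/under information but mirrors the diagram across $\Delta$. At a crossing on the axis it therefore exchanges the slopes of the over- and under-strand, i.e.\ it switches the crossing, and it carries the $n_i$-tangle to the $(-n_i)$-tangle. So $\rho(K)\neq K$ whenever some $n_i\neq 0$, and ``$F$ = quotient by $\rho$'' and ``$K/\rho=K_D$'' are meaningless. (Also, $S^3/\rho$ is a $3$-ball, not $S^3$.) Nor can you ``untwist by an isotopy'': the $n_i$- and $0$-tangles are not isotopic rel $\partial B_i$.

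Worse, for odd $n_i$ no map with $F^{-1}(K_D)=K$ extending the fold on $\partial B_i$ exists at all. Let $p,p'$ be the endpoints of the arc $a_i=K_D\cap B_i$. An odd twist joins $p$ to $\rho(p')$. Because $\rho|_{\partial B_i}$ reverses orientation, the fold sends the boundary meridian loops at $p$ and at $\rho(p')$ to the \emph{same} generator of $\pi_1((B_i\cap H)\setminus a_i)\cong\Z$. But the two ends of one strand have inverse meridians in $H_1(B_i\setminus K)$. So the downstairs lifting criterion cannot be used inside the twist balls.

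The missing idea is to define nothing downstairs inside the twist balls and to extend upstairs instead. On $X=S^3\setminus\bigsqcup_{i\ge 1}\Int B_i$ the knot $K$ agrees with $K_D\sharp K_{D^*}$, which \emph{is} $\rho$-invariant (the $\infty$-tangle in $B_0$ is symmetric). Hence $\rho$ lifts to an involution $\tilde\rho$ of $\Sigma_2(K_D)\sharp\Sigma_2(K_{D^*})$ preserving $\widetilde X=q^{-1}(X)$. The fold, equal to the identity on the $D$-side and to $\tilde\rho$ on the other side, gives a map from $\widetilde X$ into $\Sigma_2(K_D)$.

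The complement of $\widetilde X$ in $\Sigma_2(K)$ is the union of the solid tori $q^{-1}(B_i)$. The fold sends their boundary tori into the boundary spheres of $q_D^{-1}(B_i\cap H)$, and these are $3$-balls. The double cover of a ball branched along a single trivial arc is a ball, not a solid torus as you wrote. By contractibility, $f$ extends over the solid tori, with no equivariance or branching condition required.

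After that, your degree argument works: take a regular value outside the balls, or note that the image misses a ball. Your retraction/van Kampen argument for $\pi_1$-surjectivity also works, and it uses that the pieces $q_D^{-1}(B_i\cap H)$ are balls, so removing them does not change $\pi_1$. This is the paper's route.
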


\begin{proof}
First note that $K_{D}^* = K_{D^*}$.
Assume that the symmetric union diagram of $K$ lies on a great $2$-sphere $S \subset S^3$ and that the axis $\Delta$ of symmetry lies in the intersection of $S$ with an orthogonal great sphere $S' \subset S^3$. 
The $2$-sphere $S'$ splits $S^3$ into two balls $B$ and $B^*$.
Then, the reflection through the $2$-sphere $S'$ induces an orientation-reversing involution $\rho$ of the orbifold connected sum $\OO(K_D \sharp K_{D^*}) = \OO(K_D) \sharp_{S'(2,2)} \OO(K_{D^*})$ along the spherical $2$-orbifold $S'(2,2)$ with underlying space $S'$. 
This involution leaves invariant the $3$-balls $B_1, \dots, B_k$ and exchanges the two sides $\OO^{0}(K_D) = \OO(K_D) \cap B$ and $\OO^{0}(K_{D^*}) = \OO(K_{D^*}) \cap B^*$ of the connected sum $\OO(K_D) \sharp_{S'(2,2)} \OO(K_{D^*})$.
Let 
\[
X = \OO(K) \setminus \underset{i=1}{\overset{k}{\bigsqcup}} \Int B_i 
 = \OO(K_D) \sharp_{S^2(2,2)} \OO(K_{D^*})\setminus \underset{i=1}{\overset{k}{\bigsqcup}} \Int B_i. 
\]
Then, $\rho$ induces an orientation-reversing involution $\rho_0$ on the orbifold $X$ which exchanges the sides $X \cap B $ and $X \cap B^*$.

Let $q \colon \Sigma_2(K) \to \OO(K)$, $q_{D} \colon \Sigma_2(K_D) \to \OO(K_D)$, and $q_{D^*} \colon \Sigma_2(K_{D^*}) \to \OO(K_{D^*})$ be the natural projections. 
Then $\widetilde{X} =q^{-1}(X)$ is a submanifold of $\Sigma_2(K)$ such that $\Sigma_2(K) \setminus \Int \widetilde{X} = \underset{i=1}{\overset{k}{\bigsqcup}} q^{-1}(B_i)$ is a collection of solid tori. 
The manifold $\widetilde{X}$ is also a submanifold of the connected sum $\Sigma_2(K_D) \sharp \Sigma_2(K_{D^*})$ and the orientation-reversing involution $\rho$ lifts to an orientation-reversing involution $\tilde{\rho}$ on $\Sigma_2(K_D) \sharp \Sigma_2(K_{D^*})$ which preserves $\widetilde{X}$ and exchanges the sides $\Sigma_{2}^{0}(K_D) = q_{D}^{-1}(\OO^{0}(K_{D}))$ and $\Sigma_{2}^{0}(K_{D^*}) = q_{D^*}^{-1}(\OO^{0}(K_{D^*}))$. 
Hence, $\tilde{\rho}$ induces an orientation-reversing involution $\tilde{\rho}_0$ on the submanifold $\widetilde{X}$ which exchanges the sides $\Sigma_{2}^{0}(K_D) \cap \widetilde{X}$ and $\Sigma_{2}^{0}(K_{D^*}) \cap \widetilde{X}$.

Therefore, one can define a map $f \colon \widetilde{X} \to \Sigma_{2}^{0}(K_D) \cap \widetilde{X}$ by $f(x)=x$ if $x\in \Sigma_{2}^{0}(K_D) \cap \widetilde{X}$ and $f(x)=\tilde{\rho}_0(x)$ otherwise.
By construction, $f$ is of degree zero and induces an epimorphism $f_{\ast} \colon \pi_{1}(\widetilde{X}) \twoheadrightarrow \pi_{1} (\Sigma_{2}^{0}(K_D) \cap \widetilde{X})$. 
Moreover, $\pi_{1} (\Sigma_{2}^{0}(K_D)  \cap \widetilde{X}) = \pi_1(\Sigma_{2}^{0}(K_D) )= \pi_1(\Sigma_{2}(K_D))$ since $\Sigma_{2}^{0}(K_D)  \setminus \Int(\Sigma_{2}^{0}(K_D)  \cap \widetilde{X}) = \underset{i=1}{\overset{k}{\bigsqcup}} q_{D}^{-1}(B_i)$ is a collection of $3$-balls. 
The map $f$ sends each boundary torus $T_i = q^{-1}(\partial B_i) \subset \partial \widetilde{X}$ to the $2$-sphere $q_{D}^{-1}(\partial B_i)$ for $1 \leq i \leq k$. 
Therefore, $f$ can be extended by a map from the solid tori $\Sigma_2(K) \setminus \widetilde{X}$ to the balls $\Sigma^{0}_2(K_D) \setminus f(\widetilde{X})$. 
This gives a $\pi_1$-surjective map $f \colon \Sigma_2(K) \to \Sigma_2(K_D)$ of degree zero.
\end{proof}

Throughout this paper, the \emph{rank} $\rank(G)$ of a finitely generated group $G$ is defined to be the minimal number of elements needed to generate $G$.
The following proposition relates the ranks of the first homology groups of the $2$-fold branched covers of a symmetric union and of its partial knot.
Recall that a finite abelian group $A$ is isomorphic to a direct sum $\Z/d_{1}\Z \oplus \cdots \oplus \Z/d_{r}\Z$ of finite cyclic groups, where $d_1\geq 2$, $d_i$ divides $d_{i+1}$ for $1 \leq i \leq r-1$, and $r =\rank A$. 
The invariant factors $\{d_1, \ldots, d_r\}$ are unique. 

\begin{proposition}[{\cite[Proposition~7.10]{BKN26}}]
\label{prop:rank}
If $K$ admits a symmetric union presentation with partial knot $K_D$, then
\[
\rank H_1(\Sigma_2(K_D); \Z) \leq \rank H_1(\Sigma_2(K); \Z) \leq 2\rank H_1(\Sigma_2(K_D); \Z).
\]
\end{proposition}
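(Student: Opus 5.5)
Both inequalities will come from the $\pi_1$-surjective, degree-zero map $f\colon \Sigma_2(K)\to\Sigma_2(K_D)$ of Proposition~\ref{prop:orbifold}, together with the explicit geometry of its construction. Throughout, write $M=\Sigma_2(K)$ and $N=\Sigma_2(K_D)$; both are rational homology spheres, since determinants of knots are odd.

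\emph{Lower bound.} The map $f$ is surjective on $\pi_1$, so it induces a surjection $H_1(M;\Z)\twoheadrightarrow H_1(N;\Z)$ after abelianization. The rank (minimal number of generators) cannot increase under a surjection of finitely generated groups. Hence $\rank H_1(N)\le \rank H_1(M)$.

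\emph{Upper bound.} I would use the decomposition from the proof of Proposition~\ref{prop:orbifold}. Consider the connected sum $N\sharp N^*$, where $N^*=\Sigma_2(K_{D^*})=-N$. The manifold $M$ is obtained from it by the following steps:
\begin{itemize}
\item remove the $k$ balls $q_D^{-1}(B_i)\cup q_{D^*}^{-1}(B_i^*)$, which after the connected sum are glued along the axis into $k$ solid tori lying in $N\sharp N^*$;
\item reglue these solid tori according to the tangle replacements, i.e.\ perform Dehn surgery on a $k$-component link $L\subset N\sharp N^*$.
\end{itemize}
More precisely, $\widetilde X$ is the exterior of $L$ in $N\sharp N^*$, and $M$ is a Dehn filling of $\widetilde X$.

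Now compare the two manifolds via $\widetilde X$. Both $M$ and $N\sharp N^*$ are fillings of $\widetilde X$, so
\[
H_1(M)=H_1(\widetilde X)/\langle\text{new meridians}\rangle .
\]
A crude bound is $\rank H_1(\widetilde X)\le \rank H_1(N\sharp N^*)+k = 2\rank H_1(N)+k$, which is too weak. I would therefore use the involution to sharpen it. The components of $L$ are the preimages of arcs lying on the axis, so each is $\tilde\rho$-invariant. Moreover, each component is the lift of an arc whose ball meets the sphere $S'$, so each component meets the separating sphere in the connected sum.

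The key claim is that each core circle of $L$ is null-homologous in $N\sharp N^*$. The reason is that it is the union of an arc in $\Sigma_2^0(K_D)$ and its mirror image, and these cancel in homology because $\tilde\rho$ is orientation-reversing. Equivalently, $f$ maps the torus $T_i$ to a sphere, which is consistent with the degree-zero property. With this claim in hand, $H_1(\widetilde X)\cong H_1(N\sharp N^*)\oplus \Z^k$, the $\Z^k$ being generated by the meridians $\mu_i$. Filling along slopes $p_i\mu_i+q_i\lambda_i$ kills the classes $p_i\mu_i+q_i[\lambda_i]$, where each $[\lambda_i]$ lies in $H_1(N\sharp N^*)$ or in the $\mu$-span.

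The upper bound then follows by counting generators: $H_1(M)$ is generated by the images of $H_1(N)\oplus H_1(N^*)$ together with $\mu_1,\dots,\mu_k$, modulo $k$ relations. In each relation the coefficient of $\mu_i$ is the framing-dependent integer $n_i$-type term, and each relation can be used to eliminate its $\mu_i$ up to torsion. I would make this precise by a mod-$p$ computation: for every prime $p$,
\[
\dim_{\Z/p} H_1(M;\Z/p)\le 2\dim_{\Z/p}H_1(N;\Z/p).
\]

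\emph{Main obstacle.} The hardest step is the mod-$p$ estimate in the last paragraph, because a filling can raise rank when $p$ divides the relevant coefficient. I would handle it with a Mayer--Vietoris and half-lives-half-dies argument. Split $M$ along the lifted sphere region, so that $M=\widetilde X_+\cup\widetilde X_-$, exchanged by $\tilde\rho$. The surgery solid tori each intersect the splitting surface. One then shows that $H_1(M;\Z/p)$ is generated by the images of $H_1(\widetilde X_\pm;\Z/p)$, and each of these surjects from $H_1(N;\Z/p)$ modulo classes that are killed. This is the approach I would try first; failing it, I would cite \cite[Proposition~7.10]{BKN26} as the paper does.
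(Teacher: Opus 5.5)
\textbf{The lower bound is fine, but the upper bound rests on a false claim.} The lower bound is correct: the $\pi_1$-epimorphism of Proposition~\ref{prop:orbifold} abelianizes to a surjection $H_1(M;\Z)\to H_1(N;\Z)$, with your notation $M=\Sigma_2(K)$, $N=\Sigma_2(K_D)$.

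The claim is that the cores $c_i$ are null-homologous in $N\sharp N^*$. The core $c_i$ meets the connected-sum sphere in two points and splits as $a_i\cup\tilde\rho(a_i)$ with $a_i\subset\Sigma_2^0(K_D)$. Closing both halves up in the sphere gives $[c_i]=(x_i,\pm\tilde\rho_*x_i)\in H_1(N)\oplus H_1(N^*)$, where $x_i$ is the class of the lift of an arc with endpoints on $K_D$. The two halves lie in different direct summands, so orientation reversal cancels nothing. What is true is only that the folded loop $f(c_i)$ backtracks, i.e.\ $f_*[c_i]=0$; that is all ``$f$ maps $T_i$ to a sphere'' tells you.

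The paper's own example refutes the claim. If every $c_i$ were null-homologous, Seifert surfaces for them would split $H_1(N\sharp N^*)$ off as a direct summand of $H_1$ of every filling, forcing $\rank H_1(M)\ge 2\rank H_1(N)$. But $6_1=\mathfrak{b}(9,2)$ is a symmetric union with partial knot $3_1$, and $H_1(L(9,2);\Z)\cong\Z/9\Z$ has rank $1<2$. Even granting the claim, you would get $H_1(M)\cong H_1(N\sharp N^*)\oplus\Coker\Lambda$ for the surgery matrix $\Lambda$, whose rank can exceed $2\rank H_1(N)$. Finally, the mod-$p$ estimate that is supposed to carry the proof is only announced. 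The phrase ``modulo classes that are killed'' in your Mayer--Vietoris sketch refers exactly to the $k$ meridian classes, and nothing in the sketch kills them.

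\textbf{The missing idea} is that $H_1(M)$ is an extension of $H_1(N)$ by $H_1(N)$, which gives both inequalities at once. The paper only cites \cite{BKN26}, but its subsequent remark points to Gordon--Litherland forms, and that route works cleanly.

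Colour $(D\cup D^*)(\infty,0,\dots,0)$ so that the region containing the axis away from $B_0$ is black. The black surface is $F_0=F_D\cup_\alpha F_{D^*}$: two mirror surfaces glued along an arc $\alpha$ of the axis, with $\partial F_D$ isotopic to $K_D$. Each $F_0\cap B_i$ is a strip along the axis, and twisting these strips by $n_i$ half-twists gives a spanning surface $F$ of $K$.

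Take a basis $\{a_j\}\cup\{\rho a_j\}$ of $H_1(F_0)$ with $a_j\subset\Int B$; the twisting identifies $H_1(F_0)$ with $H_1(F)$. The Gordon--Litherland form of $F_0$ is then $G_D\oplus(-G_D)$, since the cross terms are linking numbers of curves in the disjoint balls $B$ and $B^*$. Twisting the $i$-th strip adds $\epsilon n_i v_iv_i^{T}$, where $\epsilon=\pm1$ is a fixed convention and $v_i=(u_i,u_i)$ records signed passages through the strip; the two halves agree because the reflection preserves the direction along the axis. With $E=\epsilon\sum_i n_iu_iu_i^{T}$, block row and column operations give
\[
\begin{pmatrix} G_D+E & E\\ E & -G_D+E\end{pmatrix}\leadsto\begin{pmatrix} G_D & 0\\ E & -G_D\end{pmatrix}.
\]
Since $\det G_D\neq 0$, this yields an exact sequence $0\to H_1(N)\to H_1(M)\to H_1(N)\to 0$, and hence both bounds. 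The $6_1$ example ($\Z/9\Z$ as an extension of $\Z/3\Z$ by $\Z/3\Z$) shows that this extension need not split, which is precisely what your null-homology claim would have ruled out.
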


\begin{remark}
By considering the Gordon-Litherland forms for knots, we can show that Proposition~\ref{prop:rank} is also true for a generalization of a symmetric union such that each twist can be knotted, that is, the boundary of any band in a $3$-ball.
Our argument in the rest of this paper actually implies that the ribbon knot $K(\frac{1}{3}, -\frac{1}{3}, \frac{1}{3}, -\frac{1}{3}, \frac{9}{2})$ is not even such a generalization of a symmetric union.
\end{remark}

\section{$K(\frac{1}{3}, -\frac{1}{3}, \frac{1}{3}, -\frac{1}{3}, \frac{9}{2})$ is a ribbon knot}
\label{sec:ribbon}

We briefly recall some notations about Montesinos knots and their Seifert fibered $2$-fold branched covers, see \cite[Chapter~12.D]{BZH14}. 
For $r \geq 1$, $\alpha_i \geq 2$, and $\beta_i \neq 0$ coprime to $\alpha_i$, let $K = K(\frac{\beta_1}{\alpha_1}, \dots, \frac{\beta_r}{\alpha_r})$ denote the Montesinos knot with $r$ rational tangles of slopes $\beta_i/\alpha_i \in \Q$.
The $2$-fold branched cover of $K$ is the closed orientable Seifert fibered $3$-manifold $\Sigma_2(K) = V(0; e_0; \frac{\beta_1}{\alpha_1}, \dots, \frac{\beta_r}{\alpha_r})$ with base $S^2(\alpha_1, \dots, \alpha_r)$ a $2$-dimensional orientable orbifold with underlying space $S^2$ and $r$ singular points with branching indices $\alpha_i$ corresponding to the $r$ exceptional fibers of types $(\alpha_i, \beta_i)$ (\cite{Mon73}).
See also \cite[Chapter~12.D]{BZH14}. 
Its rational Euler number $e_0 = \sum_{i=1}^{r} \frac{\beta_i}{\alpha_i} \in \Q$ satisfies that $|{\det(K)}| = |H_1(\Sigma_2(K); \Z)| = |e_0| \prod_{i=1}^{r} \alpha_i$ (see \cite[Corollary~6.2]{JaNe83}).

The goal of this section is to show the next result.

\begin{proposition}\label{prop:ribbon}
The Montesinos knot $K(\frac{1}{3}, -\frac{1}{3}, \frac{1}{3}, -\frac{1}{3}, \frac{9}{2})$ is a ribbon knot.
\end{proposition}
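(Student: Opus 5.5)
To show that $K = K(\frac{1}{3}, -\frac{1}{3}, \frac{1}{3}, -\frac{1}{3}, \frac{9}{2})$ is ribbon, I will exhibit explicit band moves. The target is a single ribbon move (one band) that turns $K$ into a two-component trivial link. By the standard criterion, a knot obtained from an $(n+1)$-component unlink by $n$ bands is ribbon.

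The key observation is that the tangles $\frac{1}{3}$ and $-\frac{1}{3}$ are mirror images of each other. In the Montesinos diagram, two adjacent tangles $\frac{1}{3}, -\frac{1}{3}$ therefore form a local symmetric configuration. A band attached across the pair, joining the two strands between them, performs a $0$-resolution there: it replaces the rational tangle sum $\frac{1}{3} + (-\frac{1}{3}) = 0$ by the trivial $0$-tangle, and splits off a component.

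Concretely, I would first apply a band move across the first pair $\frac{1}{3}, -\frac{1}{3}$ and track the result. The resulting link should be $K(\frac{1}{3}, -\frac{1}{3}, \frac{9}{2}) \sqcup$ (an unknot), or a two-component link built from these pieces. The remaining knot $K(\frac{1}{3}, -\frac{1}{3}, \frac{9}{2})$ is then simplified further. Since $\frac{1}{3} + (-\frac{1}{3})$ cancels, the pair $\frac{1}{3}, -\frac{1}{3}$ again contributes a $0$-tangle after a second band. This leaves the rational knot with slope $\frac{9}{2}$, that is $\mathfrak{b}(2,9)$ read appropriately. I need to check that this is the $2$-bridge knot/link obtained as the closure which is trivial.

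Here lies the main obstacle. The tangle $\frac{9}{2}$ has $|\beta| > |\alpha|$, so it is the integer tangle $4$ plus $\frac{1}{2}$. The Montesinos closure with tangles summing to an integer part must be handled carefully, and the naive cancellation above will leave a $2$-bridge knot with determinant $9$, namely $\mathfrak{b}(9,2)$. That knot is $6_1$, which is itself ribbon, and this is consistent with $\det(K) = 729 = 9^3$.

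So the refined plan is as follows. Use two bands, one across each pair $\frac{1}{3}, -\frac{1}{3}$, to produce a $3$-component split link consisting of two unknots and the knot $\mathfrak{b}(9,2) = 6_1$. Then use one further band realizing the standard ribbon move of $6_1$ to get a $4$-component unlink. Altogether this is $3$ bands producing a $4$-component unlink, so $K$ is ribbon.

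The delicate steps are verifying that each band indeed splits off an unknotted component unlinked from the rest, which I would check with explicit diagrams, and confirming the identification of the residual tangle as $6_1$. Equivalently, one can check that the band sum of the two $\pm\frac13$ pairs yields a split union. As a fallback, I would instead find the band directly from the ribbon move of the pretzel-like knot $K(\frac13,-\frac13,q)$, which is known to be ribbon.
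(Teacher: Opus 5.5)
Your final plan is the paper's argument: one band across each adjacent $\pm\frac{1}{3}$ pair splits off a split unknot (this is the paper's Lemma~\ref{lem:ribbon}, which is justified by an explicit picture), leaving $K(\frac{9}{2}) = \mathfrak{b}(9,2) = 6_1$, which is ribbon. Two caveats: the identity $\frac{1}{3} + (-\frac{1}{3}) = 0$ is only a heuristic, since the sum tangle is not the $0$-tangle, so the band claim genuinely rests on the diagram; and the identification $K(\frac{9}{2}) = \mathfrak{b}(9,2)$ should be justified as the paper does, via $\Sigma_2(K(\frac{9}{2})) \cong L(9,2)$ and Schubert's classification of $2$-bridge knots.
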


A knot $K \subset S^3$ is ribbon if and only if one can attach $n$ pairwise disjoint embedded bands which cut $K$ into a trivial link $L_0$ with $n+1$ unknotted components. 
Each band can be knotted and twisted, but it must meet the knot $K$ only along the attaching ends of the band. 
Then the knot $K$ bounds a disk with ribbon singularities obtained by band-sum of the $n+1$ pairwise disjoint disks bounding the components of $L_0$ and by making the bands transverse to the disks.

The following lemma follows from \cite[Section~1 c)]{OrRa68}.
See also \cite[Section~5.4(i)]{Orl72}, where the difference in sign depends on the orientation convention for Seifert fibered spaces and lens spaces. 
For the sake of completeness, we give a different proof.

\begin{lemma}\label{lem:montesinos2bridge}
Let $\alpha \geq 2$ and let $\beta > 0$ be coprime to $\alpha$.
Then the Montesinos knot $K(\frac{\beta}{\alpha})$ is the $2$-bridge knot $\frak{b}(\beta, \alpha)$.
\end{lemma}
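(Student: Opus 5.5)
We want to show that $K(\frac{\beta}{\alpha})$, the Montesinos knot with a single rational tangle of slope $\beta/\alpha$, is the $2$-bridge knot $\frak{b}(\beta,\alpha)$. By definition $\frak{b}(\beta,\alpha)$ is the $2$-bridge knot whose $2$-fold branched cover is $L(\beta,\alpha)$. The plan is to compare $2$-fold branched covers and then appeal to Schubert's classification of $2$-bridge links.

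\textbf{Step 1: the knot is $2$-bridge.} $K(\frac{\beta}{\alpha})$ is the closure of a single rational tangle. The numerator or denominator closure of a rational tangle is a $2$-bridge (rational) link. So $K(\frac{\beta}{\alpha}) = \frak{b}(p,q)$ for some $p,q$.

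\textbf{Step 2: the branched cover is a lens space.} By the Seifert description recalled above, $\Sigma_2(K(\frac{\beta}{\alpha})) = V(0; e_0; \frac{\beta}{\alpha})$. This is a Seifert fibration over $S^2$ with a single exceptional fiber of type $(\alpha,\beta)$, and with $e_0 = \beta/\alpha$. Such a space is a lens space. It is obtained by gluing two solid tori: a fibered neighbourhood of the exceptional fiber, and the complement of that neighbourhood, which is a fibered solid torus over a disk with trivial monodromy.

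\textbf{Step 3: identify the lens space.} The order of $H_1$ is $|e_0|\,\alpha = \beta$, so $p = \beta$. To pin down $q$, I would compute the gluing matrix explicitly.
\begin{itemize}
\item Take the section-fiber basis $(Q,H)$ on the boundary torus.
\item The meridian of the exceptional solid torus is $\alpha Q + \beta H$.
\item The meridian of the other solid torus is the section curve $Q$, up to the convention's sign.
\end{itemize}
Writing the first meridian in terms of a basis adapted to the second solid torus gives the $(p,q)$-curve $(\beta,\alpha)$. Hence the cover is $L(\beta,\alpha)$, up to the sign and orientation conventions of \cite[Chapter~12.D]{BZH14}.

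\textbf{Step 4: conclude.} By Schubert's classification, $2$-bridge knots are determined by their $2$-fold branched covers together with orientation. Since $\frak{b}(\beta,\alpha)$ is by definition the $2$-bridge knot with cover $L(\beta,\alpha)$, the identification in Step 3 gives $K(\frac{\beta}{\alpha}) = \frak{b}(\beta,\alpha)$.

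\textbf{Main obstacle.} The difficult step is Step 3, specifically getting the signs right. A wrong orientation convention turns $L(\beta,\alpha)$ into $L(\beta,-\alpha)$, and this changes the answer to the mirror image $\frak{b}(\beta,-\alpha)$.

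To control this, I would cross-check with a small example. The tangle of slope $1/2$ should close to the unknot: with $\beta=1$ we get $L(1,\cdot)=S^3$, consistent with $\frak{b}(1,2)$ being trivial. The case $\beta=3$, $\alpha=2$ should give a trefoil of definite handedness. Matching that handedness against the tangle picture in \cite[Figure~12.10(b)]{BZH14} fixes the sign.

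Alternatively, the sign can be obtained without Seifert invariants. Rotating the rational tangle by $90^\circ$ inverts its slope, so closing a tangle of slope $\beta/\alpha$ in the Montesinos manner is the same as taking the $2$-bridge closure of a tangle of slope $\alpha/\beta$. Then the continued-fraction conventions can be compared directly with those defining $\frak{b}(\beta,\alpha)$.
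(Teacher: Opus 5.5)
Your proposal is correct and follows essentially the paper's route. Both arguments realize $\Sigma_2(K(\frac{\beta}{\alpha}))$ as a single Dehn filling of the trivially fibered solid torus: the paper fills the unknot exterior along $\alpha\ell-\beta m$, i.e.\ $-\beta/\alpha$ surgery on the unknot, giving $L(\beta,\alpha)$, and then invokes Schubert's classification. The differences are minor. The paper fixes the sign by taking that filling curve from the Jankins--Neumann convention rather than by checking a test case. It also leaves your Step~1 (that $K(\frac{\beta}{\alpha})$ is $2$-bridge) implicit.
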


\begin{proof}
Let $V = D^2 \times S^1$ be the exterior of an unknot $U$ in $S^3$ with the trivial circle foliation whose fiber on $\partial V$ is a meridian $m$ of $U$ and a section is the preferred longitude $\ell$ of $U$ which bounds a meridian disk in $V$.
The $2$-fold branched cover of the Montesinos knot $K(\frac{\beta}{\alpha})$ is the Seifert fibered manifold $V(0; \frac{\beta}{\alpha}; \frac{\beta}{\alpha})$ with base $S^2$ and a single exceptional fiber of type $\frac{\beta}{\alpha}$. 
It can be obtained by Dehn filling along the simple closed curve $\alpha \ell -\beta m$ on $\partial V$ (see \cite{JaNe83}). 
It means that $\Sigma_2(K(\frac{\beta}{\alpha}))$ can be obtained by Dehn surgery of $S^3$ on $U$ along the slope $-\frac{\beta}{\alpha}$, and hence it is homeomorphic to the lens space $L(\beta, \alpha)$. 
Since $2$-bridge knots are classified by their $2$-fold branched covers (see \cite[Satz~6]{Sch56}), the Montesinos knot $K(\frac{\beta}{\alpha})$ is the $2$-bridge knot $\frak{b}(\beta, \alpha)$. 
\end{proof}

\begin{lemma}\label{lem:ribbon}
Let $r \geq 1$.
If the Montesinos knot $K(\frac{\beta_1}{\alpha_1},\dots, \frac{\beta_r}{\alpha_r})$ is ribbon, then the Montesinos knot $K(\frac{1}{n}, -\frac{1}{n}, \frac{\beta_1}{\alpha_1}, \dots, \frac{\beta_r}{\alpha_r})$ is ribbon for $n \geq 1$. 
\end{lemma}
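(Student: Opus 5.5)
We prove Lemma~\ref{lem:ribbon} by exhibiting one explicit band move that reduces $K' = K(\frac{1}{n}, -\frac{1}{n}, \frac{\beta_1}{\alpha_1}, \dots, \frac{\beta_r}{\alpha_r})$ to the split link $K(\frac{\beta_1}{\alpha_1},\dots,\frac{\beta_r}{\alpha_r}) \sqcup O$, where $O$ is an unknot. We then combine this with the ribbon bands for $K(\frac{\beta_1}{\alpha_1},\dots,\frac{\beta_r}{\alpha_r})$, using the band characterization of ribbon knots recalled above.

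The key local picture is the pair of adjacent rational tangles $\frac{1}{n}$ and $-\frac{1}{n}$ in the standard Montesinos diagram (Figure~\ref{fig:Montesinos_ex}). With the convention of \cite[Figure~12.10(b)]{BZH14}, each is a vertical twist region with $n$ half-twists, and the two twist in opposite senses. Their sum, as tangles in a ball $B$, is the tangle $T_n = \frac{1}{n} + (-\frac{1}{n})$.

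I would attach a single band $b$ inside $B$, running horizontally between the two twist regions and joining the two inner strands. The claim is that the band surgery along $b$ splits off a closed component $O$ formed by the two twist regions cancelled against each other. This component is the boundary of a twisted annulus whose twists undo each other, so it is an unknot. It is contained in a small ball disjoint from the rest of the diagram. What is left in $B$ is the $0$-tangle (or $\infty$-tangle, depending on convention) which is the identity for Montesinos tangle addition.

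An equivalent and cleaner way to say this: $T_n$ is the "symmetric" tangle $R + R^*$ with $R = \frac{1}{n}$. Performing the band move across the mirror axis yields $R \cup R^*$ closed up, which is the unknot plus the trivial tangle. This is exactly the local picture behind symmetric unions with one twist region. After the move, the diagram outside $B$ is untouched, so the resulting link is $K(\frac{\beta_1}{\alpha_1},\dots,\frac{\beta_r}{\alpha_r}) \sqcup O$ with $O$ split.

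We conclude as follows. By hypothesis there are $m$ disjoint bands cutting $K(\frac{\beta_1}{\alpha_1},\dots,\frac{\beta_r}{\alpha_r})$ into an $(m+1)$-component trivial link. Those bands can be isotoped away from the small ball containing $O$ and away from $b$. Together with $b$, this gives $m+1$ disjoint bands cutting $K'$ into an $(m+2)$-component trivial link, so $K'$ is ribbon.

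The main obstacle is making the first step rigorous. One must check, with the paper's sign conventions for rational tangles, that the chosen band really produces a split unknot together with the trivial tangle. The alternative band could instead create an $n$-twisted clasp, and one must also confirm that the twist directions cancel rather than add. I would verify this by drawing the case $n=1$: there $\frac{1}{1} + (-\frac{1}{1})$ is two opposite crossings, and a band between them visibly yields an unknot plus two parallel arcs. Then I would note that the general $n$ case is identical after sliding the band along the twist regions, i.e.\ by the induced isotopy of $R\cup R^*$. A second small check is that if $n=1$ or the tangles are merged in the Montesinos normal form, the band picture is unaffected.
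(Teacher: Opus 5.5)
Your overall plan is the paper's: a single fission band turns $K(\frac{1}{n},-\frac{1}{n},\frac{\beta_1}{\alpha_1},\dots,\frac{\beta_r}{\alpha_r})$ into $K(\frac{\beta_1}{\alpha_1},\dots,\frac{\beta_r}{\alpha_r})\sqcup U$, and then you add the ribbon bands of the latter. The final assembly step is standard and fine. The gap is in the only step with content, the band itself. You never pin down where the band goes, and your plan for verifying it does not work.

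\emph{The band you describe.} The phrase ``$R\cup R^*$ closed up'' and your comparison with symmetric unions suggest the band along the axis that joins the top connecting arc to the bottom one. That band turns $\frac{1}{n}+(-\frac{1}{n})$ into the $\frac{1}{n}$ column capped on its right and the $-\frac{1}{n}$ column capped on its left. Capping each once more gives the denominator closures $T(2,\pm n)$. So for $n\geq 3$ you get two knotted arcs, not an unknot plus a trivial tangle.

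\emph{The $n=1$ check.} This check is vacuous. The tangle $\frac{1}{1}+(-\frac{1}{1})$ is already the $0$-tangle by a Reidemeister~II move, so any trivial band works there. For $n\geq 3$, however, $\frac{1}{n}+(-\frac{1}{n})$ is not a trivial tangle: its $2$-fold branched cover is Seifert fibered over $D^2(n,n)$. So no ``sliding of the band along the twist regions'' reduces the general case to $n=1$, and the height at which the band crosses the axis genuinely matters.

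\emph{The split-off loop.} As drawn, the loop does not sit in a small ball away from the rest of the diagram. It winds around the other string, and you have to produce an isotopy that separates it.

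\emph{A band that works (for $n$ odd).} Number the crossings in each column $1,\dots,n$ from the top. Let the band cross the axis between crossings $1$ and $2$, joining the edge to the right of the $\frac{1}{n}$ column to its mirror edge to the left of the $-\frac{1}{n}$ column.
\begin{enumerate}
\item For $n$ odd, both edges lie on the string $a$ running from NW to NE, which is the string containing the bottom connecting arc. Surgery therefore splits $a$ into an arc $a'$ and a loop $L$.
\item The arc $a'$ meets the other string $c$ only at the two first crossings. These are exchanged by reflection in the vertical plane through the axis, and that reflection preserves heights. Hence $a'$ is the over-strand at both or the under-strand at both, and it can be pushed off to a boundary-parallel arc from NW to NE.
\item Below the band sits a copy of $\frac{1}{n-1}+(-\frac{1}{n-1})$. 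Since $n-1$ is even, its middle strands close up into $L$.
\item Now $L$ runs parallel to $c$, whose two legs are joined over the top. The strip between $L$ and $c$ carries $\frac{n-1}{2}$ full twists in one column and the mirror, hence opposite, amount in the other.
\item Sliding these twists along the strip cancels them. Then $L$ bounds a disk disjoint from $a'\cup c$, and the tangle becomes the $0$-tangle plus a split unknot.
\end{enumerate}
For $n=3$ this step is just a belt around the two legs of $c$ sliding off over the top. The paper supplies this step only through Figure~\ref{fig:band_sum}. The argument needs $n$ odd: for even $n$ the middle strands already close up, and $K(\frac{1}{n},-\frac{1}{n},\dots)$ is a link rather than a knot.
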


\begin{proof} 
Attaching a band between the two adjacent strands with crossing number $n$ and $-n$ as shown in Figure~\ref{fig:band_sum} yields the disjoint union of an unknot $U$ and of the Montesinos knot $K(\frac{\beta_1}{\alpha_1},\dots, \frac{\beta_r}{\alpha_r})$. 
Since this Montesinos knot is ribbon, one can further attach to $K(\frac{\beta_1}{\alpha_1},\dots, \frac{\beta_r}{\alpha_r})$ finitely many pairwise disjoint embedded bands which cut the knot into a disjoint union of unknots. 
This shows that the Montesinos knot $K(\frac{1}{n}, -\frac{1}{n}, \frac{\beta_1}{\alpha_1},\dots, \frac{\beta_r}{\alpha_r})$ is ribbon.
\end{proof}

\begin{figure}[h]
 \centering \includegraphics[width=\textwidth]{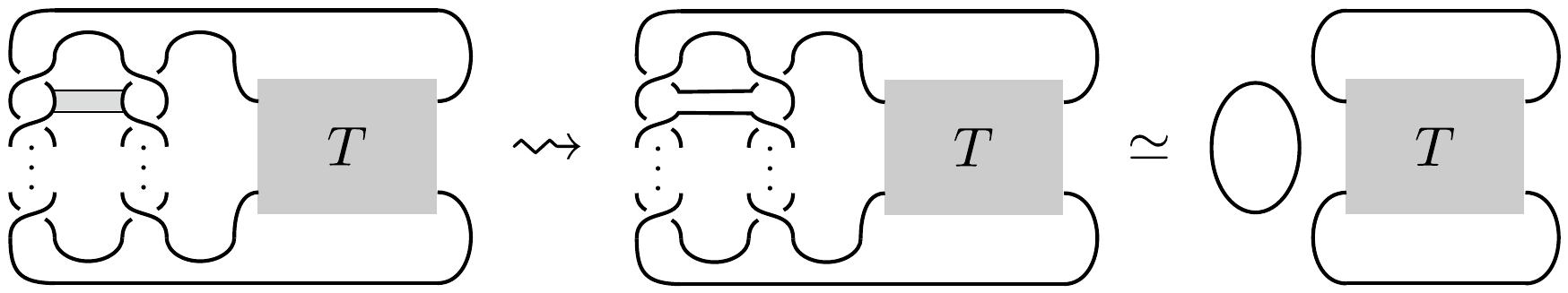}
 \caption{A band surgery from $K(\frac{1}{n}, -\frac{1}{n}, \frac{\beta_1}{\alpha_1},\dots, \frac{\beta_r}{\alpha_r})$ to $U\sqcup K(\frac{\beta_1}{\alpha_1},\dots, \frac{\beta_r}{\alpha_r})$, where $T$ represents $r$ rational tangles.}
 \label{fig:band_sum}
\end{figure}

Proposition~\ref{prop:ribbon} follows now from Lemmas~\ref{lem:montesinos2bridge} and \ref{lem:ribbon}. 
Here, Lemma~\ref{lem:montesinos2bridge} shows that the Montesinos knot $K(\frac{9}{2})$ is the $2$-bridge knot $\frak{b}(9, 2) = 6_1$ which is a symmetric union with partial knot $3_1$, and thus it is a ribbon knot.

\section{Proof of Theorem~\ref{thm:notsymmetricunion}}\label{sec:notsym}

We argue by contradiction and assume that the Montesinos knot $K =  K(\frac{1}{3}, -\frac{1}{3}, \frac{1}{3}, -\frac{1}{3}, \frac{9}{2})$ admits a symmetric union presentation with partial knot $K_D$. 
The proof of Theorem~\ref{thm:notsymmetricunion} splits into two cases according to whether $\Sigma_2(K_D)$ is aspherical or not. 
In the non-aspherical case, the structure of $\pi_1(\Sigma_2(K_D))$ is highly constrained by the orbifold theorem, which allows a direct combinatorial argument. 
The aspherical case requires a different approach based on the degree-zero map of Proposition~\ref{prop:orbifold} and results on Seifert $3$-manifolds. 
Finally, Propositions~\ref{prop:nonaspherical} and \ref{prop:aspherical} show that the Montesinos knot $K(\frac{1}{3}, -\frac{1}{3}, \frac{1}{3}, -\frac{1}{3}, \frac{9}{2})$ cannot admit a symmetric union presentation.

\begin{proposition}\label{prop:nonaspherical} 
The knot $K(\frac{1}{3}, -\frac{1}{3}, \frac{1}{3}, -\frac{1}{3}, \frac{9}{2})$ cannot admit a symmetric union presentation with a partial knot $K_D$ such that $\Sigma_2(K_D)$ is not aspherical.
\end{proposition}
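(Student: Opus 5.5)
We argue by contradiction and suppose $K$ is a symmetric union with partial knot $K_D$ such that $\Sigma_2(K_D)$ is not aspherical. First we compute $H_1(\Sigma_2(K);\Z)$. The Seifert manifold $V(0;e_0;\frac13,-\frac13,\frac13,-\frac13,\frac29)$ has Seifert invariants $e_0 = \frac{9}{2}$ and $\prod \alpha_i = 3^4\cdot 2$, so $|H_1| = 729$, matching $\det(K)$. From the standard presentation
\[
\langle c_1,\dots,c_5,h \mid c_i^{\alpha_i}h^{\beta_i},\ c_1\cdots c_5 \rangle
\]
(abelianized), we expect $H_1(\Sigma_2(K);\Z)\cong (\Z/3\Z)^4\oplus \Z/9\Z$ or a similar group; in any case the rank should be $4$ or $5$. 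This is a Smith normal form computation that we would carry out explicitly. Proposition~\ref{prop:rank} then forces $\rank H_1(\Sigma_2(K_D);\Z)\ge 2$. Also, since $\pi_1(\Sigma_2(K))$ surjects onto $\pi_1(\Sigma_2(K_D))$ by Proposition~\ref{prop:orbifold}, $|H_1(\Sigma_2(K_D))|$ is finite (it is $\det K_D$, which is odd) and $H_1(\Sigma_2(K_D))$ is a quotient of $H_1(\Sigma_2(K))$. Hence it is a $3$-group of rank at least $2$.

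Next we use the orbifold theorem and geometrization to constrain $\Sigma_2(K_D)$. If it is not aspherical, then either it is reducible or it has finite fundamental group. In the reducible case, $\Sigma_2(K_D)$ is a connected sum, and by the equivariant sphere theorem $K_D$ is a composite knot. Each summand is again a $2$-fold branched cover, and we recurse on the prime summands. So the essential case is a prime $K_D$ with $\pi_1(\Sigma_2(K_D))$ finite, where $\Sigma_2(K_D)$ is a spherical manifold. Finite $3$-manifold groups of the form $\pi_1(\Sigma_2)$ with odd-order $H_1$ are cyclic or of the type $\Z/m\times Q$ where $Q$ is binary dihedral, tetrahedral, octahedral or icosahedral. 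All of these have $H_1$ of rank at most $2$, and in the non-cyclic cases the $3$-part of $H_1$ is cyclic, or at most $(\Z/3)^2$ only in specific situations. So for a prime summand, $H_1$ has cyclic $3$-part. A connected sum of several such summands then gives a free product of finite groups whose abelianization has rank at least $2$.

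The core step is to rule out a surjection from $G=\pi_1(\Sigma_2(K))$ onto a free product $A*B$ of nontrivial groups with finite abelianizations. Here $G$ is the fundamental group of a Seifert manifold with base $S^2(3,3,3,3,2)$; its center contains the fiber $h$, and $G/\langle h\rangle$ is the orbifold group $\langle c_1,\dots,c_5\mid c_i^{\alpha_i}, c_1\cdots c_5\rangle$. The image of $h$ is central in $A*B$, hence trivial. So we get a surjection from the triangle-type orbifold group $\Gamma=\pi_1^{\orb}(S^2(3,3,3,3,2))$ onto $A*B$, in which each $c_i$ maps to a torsion element. Torsion elements of a free product are conjugate into a factor. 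We would then argue, via Bass--Serre theory acting on the tree, that a product $c_1\cdots c_5=1$ of elliptic elements generating the whole group forces the images to lie in a common factor unless there is enough room. This is the main obstacle. What I would try first: the generators of order $2$ and $3$ each fix a vertex; generation of $A*B$ requires images in both factors; and the relation $c_1\cdots c_5=1$ with the rank constraint $\rank H_1\ge2$ coming from the $3$-parts would be checked against all possible distributions of the $c_i$ among factors. Finally, for a prime spherical $K_D$ (not a connected sum), the $3$-part of $H_1$ is cyclic, which contradicts rank $\ge 2$ directly, completing the contradiction.
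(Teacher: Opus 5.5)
Your prime case (finite $\pi_1$, hence cyclic $H_1$, contradicting $\rank\ge 2$) is fine and agrees with the paper. The reducible case, which you yourself leave as ``the main obstacle'', is a genuine gap, and your plan for it cannot succeed with the information you have gathered.

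First, your homology guess is off. Reducing the relations mod $3$ kills $h$ and $c_5$ (the order-$2$ generator) and leaves only $c_1+c_2+c_3+c_4=0$. So $H_1(\Sigma_2(K);\Z)\cong(\Z/3\Z)^2\oplus\Z/81\Z$ has rank $3$, and Proposition~\ref{prop:rank} gives only $\rank H_1(\Sigma_2(K_D);\Z)\ge 2$.

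Second, your core step as stated is false. The assignment $c_1\mapsto a$, $c_2\mapsto a^{-1}$, $c_3\mapsto b$, $c_4\mapsto b^{-1}$, $c_5\mapsto 1$ defines an epimorphism $\pi_1^{\orb}(S^2(3,3,3,3,2))\twoheadrightarrow\Z/3\Z*\Z/3\Z=\pi_1(L(3,1)\sharp L(3,1))$. Moreover $(\Z/3\Z)^2$ is a $3$-group of rank $2$ that is a quotient of $H_1(\Sigma_2(K);\Z)$. So $K_D=3_1\sharp 3_1$ passes every test you propose, and no analysis of how the $c_i$ distribute among the free factors can yield a contradiction. Your ``recursion on prime summands'' does not help either: the rank condition concerns all of $H_1(\Sigma_2(K_D))$ and is not inherited by the summands.

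The missing ingredient is Lamm's determinant identity $\det K=(\det K_D)^2$, i.e.\ $|H_1(\Sigma_2(K_D);\Z)|=27$. After killing the center, as you correctly do, $H_1(\Sigma_2(K_D);\Z)$ is a quotient of $\pi_1^{\orb}(S^2(2,3,3,3,3))^{\ab}\cong(\Z/3\Z)^3$, so it must be $(\Z/3\Z)^3$.

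You must also justify that every prime summand has finite $\pi_1$, which you assume tacitly when you speak of ``a free product of finite groups''. The orbifold group is generated by torsion and surjects onto each free factor, so each factor contains torsion and cannot be the group of an aspherical manifold.

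From there the paper's argument runs as follows:
\begin{enumerate}
\item Each summand then has cyclic $H_1$, so there are at least three of them.
\item A generator count forces $\Sigma_2(K_D)\cong L(3,1)\sharp L(3,1)\sharp M$, giving an epimorphism onto $\Z/3\Z*\Z/3\Z*\Z/3\Z$.
\item The order-$2$ generator must die there, so the map factors through $\pi_1^{\orb}(S^2(3,3,3,3))$, which is generated by three elements of order $3$.
\item Hopficity of $\Z/3\Z*\Z/3\Z*\Z/3\Z$ then gives $\pi_1^{\orb}(S^2(3,3,3,3))\cong\Z/3\Z*\Z/3\Z*\Z/3\Z$, which is absurd.
\end{enumerate}
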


\begin{proposition}\label{prop:aspherical} 
The knot $K(\frac{1}{3}, -\frac{1}{3}, \frac{1}{3}, -\frac{1}{3}, \frac{9}{2})$ cannot admit a symmetric union presentation with a partial knot $K_D$ such that $\Sigma_2(K_D)$ is aspherical.
\end{proposition}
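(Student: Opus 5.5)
The plan is to use the degree-zero, $\pi_1$-surjective map $f \colon M=\Sigma_2(K) \to N=\Sigma_2(K_D)$ of Proposition~\ref{prop:orbifold}, together with the Seifert structure of $M$, to reach a contradiction via the Euler number. Here $M = V(0; \frac{9}{2}; \frac13,-\frac13,\frac13,-\frac13,\frac92)$ has hyperbolic base orbifold $S^2(3,3,3,3,2)$. Hence $\pi_1(M)$ is infinite, the class $h$ of a regular fiber is central of infinite order, and $\pi_1(M)/\langle h\rangle$ is the orbifold group $\pi_1^{\orb}(S^2(3,3,3,3,2))$. Its generators $c_1,\dots,c_5$ satisfy $c_i^{\alpha_i}=h^{-\beta_i}$ and $c_1\cdots c_5 = h^{b}$. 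The rational Euler number $e_0=\frac92$ is nonzero, consistent with $M$ being a rational homology sphere.

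\emph{Step 1: the fiber survives.} Assume $N$ is aspherical. Then $\pi_1(N)$ is torsion-free and infinite, and $N$ is a rational homology sphere since $K_D$ is a knot. Suppose $f_*(h)=1$. Then $f_*(c_i)^{\alpha_i}=1$, so torsion-freeness gives $f_*(c_i)=1$ for all $i$. Since the $c_i$ and $h$ generate $\pi_1(M)$, surjectivity would force $\pi_1(N)=1$, a contradiction. Hence $f_*(h)\neq 1$ is a nontrivial central element of $\pi_1(N)$. By the Seifert fiber space theorem (Casson--Jungreis, Gabai, Scott), $N$ is then a Seifert fibered space with infinite $\pi_1$.

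\emph{Step 2: make $f$ fiber-preserving.} Next I would show that $f_*(h)$ is a power $h_N^k$, $k\neq 0$, of the fiber class of $N$. When the base orbifold of $N$ is hyperbolic, the center of $\pi_1(N)$ is exactly $\langle h_N\rangle$, which gives this. The Euclidean (flat or Nil) cases need separate care; there I would use that $N$ is a rational homology sphere to pin down the possible fibrations and again identify $f_*(h)$ with a fiber power. Once this holds, standard results (Scott; Rong's work on maps between Seifert manifolds) let us homotope $f$ to a fiber-preserving map. This map covers an orbifold map $\bar f \colon S^2(3,3,3,3,2)\to B_N$ and satisfies $\deg f = k\cdot \deg \bar f$. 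I expect this step to be the main obstacle, especially ruling out the degenerate Euclidean situations and justifying the homotopy in full generality.

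\emph{Step 3: Euler number contradiction.} For fiber-preserving maps of Seifert manifolds there is the relation
\[
k\, e(M) = \deg(\bar f)\, e(N).
\]
This can be checked by pulling back the Euler class, or by comparing the orders of the fiber classes in rational homology. Since $e(M)=\frac92\neq 0$ and $k\neq 0$, we get $\deg \bar f\neq 0$; in particular $e(N)\neq 0$, consistent with $N$ being a rational homology sphere. Therefore $\deg f = k\deg\bar f \neq 0$, contradicting the fact that $f$ has degree zero (Proposition~\ref{prop:orbifold}). If needed, Proposition~\ref{prop:rank} and $|H_1(M)|=729$ can be used to restrict the possible $N$ in the Euclidean subcases of Step 2.
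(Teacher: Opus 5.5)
Your proposal is correct. Steps~1--2 match the paper; the endgame is genuinely different.

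\textbf{Common part.} Your Steps~1--2 are the paper's Lemma~\ref{lem:fiber} together with the first half of Lemma~\ref{lem:missingfiber}. The center of $\pi_1(N)$ is the fiber subgroup, because $N$ is an aspherical rational homology sphere over $S^2$ whose orbifold group is centerless, and Rong's argument homotopes $f$ to a fiber-preserving map. As the paper does, you must check that Rong's proof nowhere uses a hypothesis on the degree of $f$.

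\textbf{The paper's endgame.} From $\deg\bar f=0$ and Kneser's theorem it pushes $f$ off a fibered solid torus $V_0$. It then pulls back vertical annuli to incompressible vertical tori in $M$ and Dehn fills along sections. Finally it uses the specific cone points $\{2,3,3,3,3\}$ to force a side with finite $\pi_1$.

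\textbf{Your endgame.} You use $k\,e(M)=\deg(\bar f)\,e(N)$ together with $\deg f=k\deg\bar f$. This relation does hold for degree-zero maps, but justify it by pulling back Euler classes. Comparing orders of fiber classes only gives a divisibility, since both fiber classes are torsion. To deal with cone points, lift $f$ to finite covers $\tilde f\colon\tilde M\to\tilde N$. Use a torsion-free finite-index $\Gamma_N<\pi_1^{\orb}(B_N)$ and a torsion-free finite-index subgroup of $\bar f_*^{-1}(\Gamma_N)$. Then $\tilde M$ maps fiberwise with degree $k$ to the pullback bundle, so obstruction theory gives $k\,e(\tilde M)=\deg(\tilde{\bar f})\,e(\tilde N)$. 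Degrees and Euler numbers scale by the covering indices, which yields the formula downstairs.

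\textbf{Comparison.} Your route is shorter and avoids Kneser's theorem and the torus decomposition. It uses only $e(M)\neq0$ and the fact that $\pi_1^{\orb}(B_M)$ is generated by torsion, not the particular base $S^2(2,3,3,3,3)$; that base is needed only in the non-aspherical case. It also shows the paper's endgame does more than necessary. Once $f(M)\subset N\setminus V_0$, the fiber class has infinite order in $H_1(N\setminus V_0;\Z)$ while $[h_M]$ is torsion, which is the homological form of your Euler-number argument. The paper's route, in exchange, stays within cut-and-paste topology and avoids orbifold Euler-class bookkeeping.
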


\section{The non-aspherical case: proof of Proposition~\ref{prop:nonaspherical}}

Let $K = K(\frac{1}{3}, -\frac{1}{3}, \frac{1}{3}, -\frac{1}{3}, \frac{9}{2})$.
Then $\Sigma_2(K)$ is the Seifert fibered manifold $V(0; \frac{9}{2}; \frac{1}{3}, -\frac{1}{3}, \frac{1}{3}, -\frac{1}{3}, \frac{9}{2})$ with base the $2$-dimensional orbifold $S^2(2, 3, 3, 3, 3)$ whose underlying space is $S^2$ having four singular points with branching indices $3$ and one with branching index $2$. 
Assume that $K$ admits a symmetric union presentation with partial knot $K_D$. 
We determine the first homology groups of $\Sigma_2(K)$ and $\Sigma_2(K_D)$.

\begin{lemma}\label{lem:homology}
$H_1(\Sigma_2(K); \Z) \cong (\Z/3\Z)^2 \oplus \Z/9^2\Z$.
Therefore, $\rank H_1(\Sigma_2(K); \Z) = 3$.
\end{lemma}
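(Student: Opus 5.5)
We will compute $H_1(\Sigma_2(K);\Z)$ directly from the standard presentation of the fundamental group of the Seifert fibered space $V(0; e_0; \frac{1}{3}, -\frac{1}{3}, \frac{1}{3}, -\frac{1}{3}, \frac{9}{2})$. Following the conventions recalled in Section~\ref{sec:ribbon}, $\pi_1(\Sigma_2(K))$ is generated by $c_1,\dots,c_5,h$ with $h$ central, the relations $c_i^{\alpha_i}h^{\beta_i}=1$, and $c_1\cdots c_5 = 1$. The sign conventions do not affect the abelian group up to isomorphism as long as they are used consistently. Abelianizing gives $H_1$ as the cokernel of the integer relation matrix whose columns correspond to $c_1,\dots,c_5,h$:
\[
\begin{pmatrix}
3&0&0&0&0&1\\
0&3&0&0&0&-1\\
0&0&3&0&0&1\\
0&0&0&3&0&-1\\
0&0&0&0&2&9\\
1&1&1&1&1&0
\end{pmatrix}.
\]
As a check, its determinant must have absolute value $|e_0|\prod\alpha_i = \frac{9}{2}\cdot 3^4\cdot 2 = 729 = 3^6$, in agreement with $\det(K)=729$ stated in the introduction.

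Next we reduce the matrix to Smith normal form. Use the last row to eliminate $c_5$, i.e.\ set $c_5 = -(c_1+c_2+c_3+c_4)$. The fifth relation then becomes $-2(c_1+c_2+c_3+c_4)+9h=0$, which leaves a $5\times 5$ matrix in $c_1,\dots,c_4,h$. Now change basis by $x = c_1+c_2+c_3+c_4$, keeping $c_2,c_3,c_4$ and $h$. Summing the first four relations gives $3x + (1-1+1-1)h = 3x = 0$, so we obtain the relations $3x=0$, $-2x+9h=0$, together with the remaining individual relations $3c_2 - h = 0$, $3c_3+h=0$, $3c_4-h=0$. Write $3c_1+h=0$ in terms of $x$ and substitute. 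The cleanest route is to compute the determinantal divisors $d_k = \gcd$ of the $k\times k$ minors. Every entry other than the row $(1,1,1,1,1,0)$ and the entries $1,-1,2,9$ is divisible by $3$, so we expect $d_1=1$, $d_2=1$, $d_3=1$ and then factors of $3$ to appear. We verify that the invariant factors are $1,1,1,3,3,81$, i.e.\ the product is $729$ with exactly three nontrivial factors.

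To confirm the rank without grinding through all the minors, compute $H_1\otimes \Z/3 = \Coker$ of the matrix mod $3$. Mod $3$ the matrix has rows $(0,0,0,0,0,1)$, $(0,0,0,0,0,-1)$, $(0,0,0,0,0,1)$, $(0,0,0,0,0,-1)$, $(0,0,0,0,2,0)$ and $(1,1,1,1,1,0)$, so it has rank $3$ over $\mathbb{F}_3$. The cokernel therefore has dimension $6-3=3$, and $\rank H_1 = 3$ because $H_1$ is a $3$-group of order $3^6$. To pin down the exact factors, exhibit an element of order $81$. From $-2x+9h=0$ and $3x=0$ we get $6x = 27h$, so $27h=0$; combined with the $c_i$ relations, $h$ and $c_2$ generate cyclic pieces of order $81$. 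Precisely, $c_2$ has order $3\cdot 27=81$ if $h$ has order $27$. Since the rank is $3$ and the order is $3^6$, the remaining two factors must multiply to $3^2$, which forces $\Z/3\oplus\Z/3$.

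The main obstacle is checking that $h$ has order exactly $27$ (equivalently, that some $c_i$ has order $81$) rather than smaller. We plan to settle this by an explicit Smith reduction on the final small system in the basis $x, c_2, c_3, c_4, h$. Alternatively, we can bound the exponent by computing the $5\times5$ minors' gcd $d_5=9$, which gives $d_6/d_5=81$.
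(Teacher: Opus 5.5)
Your mod-$3$ argument for the rank is correct and differs from the paper. The relation matrix has rank $3$ over $\mathbb{F}_3$, so $\dim_{\mathbb{F}_3} H_1\otimes\Z/3=3$. Since $H_1$ is a $3$-group of order $3^6$, this is its minimal number of generators. That settles the ``Therefore'' part without any Smith reduction, and it is all that Corollary~\ref{cor:homology} later uses. The paper instead row/column-reduces the full $6\times 6$ matrix to diagonal form, which gives the isomorphism type and the rank at once.

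\textbf{The gap is in the first assertion, the isomorphism type.} Rank $3$ and order $3^6$ are equally compatible with $\Z/3\oplus\Z/3\oplus\Z/81$, $\Z/3\oplus\Z/9\oplus\Z/27$ and $(\Z/9)^3$. Everything therefore hinges on a \emph{lower} bound: an element of order $81$, or $h$ of order at least $27$. What you actually derive, $27h=0$, is only an upper bound. The decisive facts (invariant factors $1,1,1,3,3,81$, or $d_5=9$) are asserted and then explicitly deferred (``we plan to settle this''). As written, the proposal does not prove the lemma.

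The missing step is short in your own basis, with your $x=c_1+c_2+c_3+c_4$:
\begin{itemize}
\item Eliminate $h=3c_2$ and put $a=c_2+c_3$, $b=c_4-c_2$. The relations become $3x=3a=3b=0$ and $2x=27c_2$.
\item Using $3x=0$, the last relation reads $x=-27c_2$.
\item Eliminating $x$ leaves $\langle c_2,a,b\mid 81c_2,\,3a,\,3b\rangle$.
\end{itemize}
Hence $H_1\cong\Z/81\oplus(\Z/3)^2$, with $c_2$ of order $81$ and $h=3c_2$ of order $27$.

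Alternatively, the assignment $c_1\mapsto 1$, $c_2\mapsto 26$, $c_3\mapsto 1$, $c_4\mapsto -1$, $c_5\mapsto 54$, $h\mapsto -3$ satisfies all six relations in $\Z/81$. This gives a surjection $H_1\twoheadrightarrow\Z/81$. Combined with your rank-and-order count, it forces the invariant factors $3,3,81$.
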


\begin{proof}
$\Sigma_2(K)$ is the Seifert fibered manifold $V(0;\frac{9}{2}; \frac{1}{3}, -\frac{1}{3}, \frac{1}{3}, -\frac{1}{3}, \frac{9}{2})$. 
By \cite[Corollary~6.2]{JaNe83}, $H_1(\Sigma_2(K); \Z)$ is generated by $x, y, z, u, v, h$ subject to the relations
\[
3x+h = 3y-h = 3u+h = 3v-h = 2z+ 9h = x+y+z+u+v=0,
\]
where $x, y, u, v,$ correspond to the generators associated to the singular fibers of order $3$, $z$ to the singular fiber of order $2$, and $h$ to the regular fiber.
Thus, $H_1(\Sigma_2(K); \Z) = \Coker A $ with
\[
A =
\begin{pmatrix}
3 & 0 & 0 & 0 & 0 & 1 \\
0 & 3 & 0 & 0 & 0 & 1 \\
0 & 0 & 0 & 0 & 2 & 1 \\
0 & 0 & 3 & 0 & 0 & 1 \\
0 & 0 & 0 & 3 & 0 & 1 \\
1 & -1 & 1 & -1 & 9 & 0
\end{pmatrix}.
\]
Using elementary row and column operations, we have
\[
A \leadsto
\begin{pmatrix}
3 & 3& -3 & 3 & -27 & 1 \\
0 & 3 & 0 & 0 & 0 & 1 \\
0 & 0 & 0 & 0 & 2 & 1 \\
0 & 0 & 3 & 0 & 0 & 1 \\
0 & 0 & 0 & 3 & 0 & 1 \\
1 & 0 & 0 & 0 & 0 & 0
\end{pmatrix}
\leadsto
\begin{pmatrix}
3 & 3 & -3 & 0 & -27 & 0 \\
0 & 3 & 0 & -3 & 0 & 0 \\
0 & 0 & 0 & -3 & 2 & 0 \\
0 & 0 & 3 & -3 & 0 & 0 \\
0 & 0 & 0 & 3 & 0 & 1 \\
1 & 0 & 0 & 0 & 0 & 0
\end{pmatrix}.
\]
Hence, $\Coker A$ is isomorphic to the cokernel of the submatrix 
\[
A'=
\begin{pmatrix}
3 & -3 & 0 & -27 \\
3 & 0 & -3 & 0 \\
0 & 0 & -3 & 2 \\
0 & 3 & -3 & 0 
\end{pmatrix}.
\]
Now $A'$ is transformed  by row and column operations as follows:
\[
A'\leadsto
\begin{pmatrix}
0 & -3 & 3 & -27 \\
3 & 0 & -3 & 0 \\
0 & 0 & -3 & 2 \\
0 & 3 & -3 & 0 
\end{pmatrix}
\leadsto
\begin{pmatrix}
0 & -3 & 0 & -27 \\
3 & 0 & -3 & 0 \\
0 & 0 & -3 & 2 \\
0 & 3 & 0 & 0 
\end{pmatrix}
\leadsto
\begin{pmatrix}
0 & 0 & 0 & -27 \\
3 & 0 & 0 & 0 \\
0 & 0 & -3 & 2 \\
0 & 3 & 0 & 0 
\end{pmatrix}.
\]
Therefore, $\Coker A \cong (\Z/3\Z)^2 \oplus \Coker A''$, where $A''=
\begin{pmatrix}
0 & -27 \\
-3 & 2 \\
\end{pmatrix}$.
Because $A''$ can be transformed into 
$
\begin{pmatrix}
1 & 0 \\
0 & 81 \\
\end{pmatrix}$, 
we conclude that $\Coker A \cong (\Z/3\Z)^2 \oplus \Z/9^{2}\Z$.
\end{proof}

Proposition~\ref{prop:rank} and Lemma~\ref{lem:homology} imply the following consequence for the homology of the $2$-fold branched cover of the partial knot $K_D$.

\begin{corollary}\label{cor:homology} 
$H_1(\Sigma_2(K_D); \Z)$ is isomorphic to either $\Z/3\Z \oplus \Z/9\Z$ or $(\Z/3\Z)^3$.
\end{corollary}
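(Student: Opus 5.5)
We first pin down the order of $H_1(\Sigma_2(K_D);\Z)$, then its rank, and finally the finite list of abelian groups that satisfy both constraints.

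\emph{Order.} For a symmetric union $K$ with partial knot $K_D$ one has $\det(K) = \det(K_D)^2$. This is a standard fact (Kinoshita--Terasaka, Lamm): the Goeritz matrix of $(D\cup D^*)(\infty,n_1,\dots,n_k)$ splits so that the determinant is independent of the twists $n_i$. It can also be seen as follows: $\det$ modulo squares is unchanged by the twist operations, and a precise computation gives $\det(K)=\det(K_D)\det(K_{D^*})$. By Lemma~\ref{lem:homology}, $|H_1(\Sigma_2(K);\Z)| = 9\cdot 81 = 729 = 27^2$. Hence $|H_1(\Sigma_2(K_D);\Z)| = |\det(K_D)| = 27$.

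\emph{Rank.} Proposition~\ref{prop:rank} together with $\rank H_1(\Sigma_2(K);\Z)=3$ from Lemma~\ref{lem:homology} gives
\[
\rank H_1(\Sigma_2(K_D);\Z) \le 3 \le 2\rank H_1(\Sigma_2(K_D);\Z).
\]
Therefore the rank $r$ is $2$ or $3$.

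\emph{Classification.} The abelian groups of order $27$ are $\Z/27\Z$, $\Z/3\Z\oplus\Z/9\Z$ and $(\Z/3\Z)^3$, of ranks $1$, $2$ and $3$ respectively. The rank condition removes the cyclic group, which leaves exactly the two groups in the statement.

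\emph{Main obstacle.} The key step is justifying $\det(K)=\det(K_D)^2$, since the paper has not yet stated it. I would cite \cite{Lam00}. Alternatively, I would argue through the degree-zero map of Proposition~\ref{prop:orbifold}, or through the order bound implicit in the proof of Proposition~\ref{prop:rank}, which comes from \cite{BKN26}. Once this is in hand, the rest is a direct combination of Proposition~\ref{prop:rank} and Lemma~\ref{lem:homology}.
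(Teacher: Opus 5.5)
Your proposal is correct and matches the paper's proof: the order $27$ comes from Lamm's identity $\det(K)=\det(K_D)^2$ \cite[Theorem~2.4]{Lam00}, the bound $2\le \rank H_1(\Sigma_2(K_D);\Z)\le 3$ comes from Proposition~\ref{prop:rank} together with Lemma~\ref{lem:homology}, and listing the abelian groups of order $27$ finishes the argument. Citing Lamm is the right way to get the order; your alternative suggestion via the degree-zero map of Proposition~\ref{prop:orbifold} would not by itself give the order, so you should drop it.
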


\begin{proof}
It follows from Proposition~\ref{prop:rank} and Lemma~\ref{lem:homology} that
\[
2 \leq \rank H_1(\Sigma_2(K_D); \Z) \leq 3.
\]
By \cite[Theorem 2.4]{Lam00}, $\vert H_1(\Sigma_2(K_D); \Z) \vert = \sqrt{\vert H_1(\Sigma_2(K); \Z) \vert} = 27$.
Therefore, according to its rank, $H_1(\Sigma_2(K_D); \Z)$ is isomorphic to  $\Z/3\Z \oplus \Z/9\Z$ or to $\Z/3\Z \oplus \Z/3\Z \oplus \Z/3\Z$.
\end{proof}

Then Proposition~\ref{prop:nonaspherical} follows from the next lemma.

\begin{lemma}\label{lemm:nonaspherical} 
The $2$-fold branched cover $\Sigma_2(K_D)$ is aspherical.
\end{lemma}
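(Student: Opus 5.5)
We argue by contradiction and suppose that $\Sigma_2(K_D)$ is not aspherical. Since $K_D$ is a knot in $S^3$, $\Sigma_2(K_D)$ is a closed orientable $3$-manifold with finite first homology of odd order $27$.

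\textbf{Step 1 (reduction to a finite group or a connected sum).} By the prime decomposition and the sphere theorem, a closed orientable $3$-manifold that is not aspherical is either a nontrivial connected sum, has infinite $\pi_1$ with an $S^2\times S^1$ summand, or is prime with finite $\pi_1$. An $S^2\times S^1$ summand would give infinite $H_1$, so it is excluded. If $\Sigma_2(K_D)$ is a nontrivial connected sum, we use the equivariant sphere theorem (or the orbifold theorem) applied to the covering involution. This shows that $K_D$ is a composite knot $K_1\sharp\cdots\sharp K_m$ with $\Sigma_2(K_D)=\sharp_j \Sigma_2(K_j)$, where each $\Sigma_2(K_j)$ is prime. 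The homology splits accordingly, and we then run the argument below summandwise, or reduce the number of summands using the rank constraint. Among the prime summands, a non-aspherical one has finite $\pi_1$. By the orbifold theorem, it is then a spherical manifold $S^3/G$ and $K_j$ is a $2$-bridge knot or a Montesinos knot with three tangles of platonic type $(2,2,n)$, $(2,3,3)$, $(2,3,4)$ or $(2,3,5)$.

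\textbf{Step 2 (homology of spherical $2$-fold branched covers).} We list the possible $H_1$ in each case.
\begin{itemize}
\item A $2$-bridge knot gives a lens space, so $H_1$ is cyclic.
\item For base $S^2(2,2,n)$ with $n$ odd (forced, since $\det$ is odd), $H_1$ has order $4|e_0|n$-type. More precisely, a Montesinos knot with three tangles has $\rank H_1 \le 2$, and an odd-order $H_1$ with two factors of orders dividing $2$ for two of the fibres forces $H_1$ cyclic.
\item Similarly, for the types $(2,3,3)$, $(2,3,4)$ and $(2,3,5)$, the $3$-torsion is cyclic, because the presentation $\alpha_i x_i+\beta_i h=0$, $\sum x_i=0$ with at most one index divisible by $3$ gives cyclic $3$-primary part.
\end{itemize}
Hence every spherical summand has cyclic $3$-primary homology. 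This is the key computation. I would carry it out by writing the Jankins--Neumann presentation as in Lemma~\ref{lem:homology} and noting that only the fibres with $3\mid\alpha_i$ contribute $3$-torsion generators.

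\textbf{Step 3 (conclusion via the rank constraint).} If $\Sigma_2(K_D)$ is prime and non-aspherical, Step~2 gives $\rank H_1\le 1$, contradicting Corollary~\ref{cor:homology}. In the connected sum case, the summands have $H_1$ orders whose product is $27$. We must also exclude summands that are aspherical, since the statement says that the whole manifold is aspherical. For this we have to use more than Corollary~\ref{cor:homology}: with the $\pi_1$-surjection of Proposition~\ref{prop:orbifold}, we note that $\pi_1(\Sigma_2(K))$, the group of a Seifert manifold with base $S^2(2,3,3,3,3)$, would surject onto a nontrivial free product. A Seifert-fibred manifold group with infinite base orbifold group has a nontrivial center, namely the fibre $h$. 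Its image under an epimorphism onto a free product $A*B$ with $A,B$ nontrivial must be trivial, since free products have trivial center unless both factors are $\Z/2$, which is impossible here because the order is odd. So the epimorphism factors through the orbifold group $\pi_1^{\orb}(S^2(2,3,3,3,3))$. Then a Kurosh/Grushko-type analysis, in which finite-order elements map into conjugates of factors, bounds the possible free product images and is compared with the $3$-primary homology requirement $(\Z/3)^3$ or $\Z/3\oplus\Z/9$. I expect this connected-sum case to be the main obstacle: both the equivariant decomposition and the detailed analysis of the free-product quotients of the orbifold group need care, whereas the prime spherical case follows quickly from Step~2.
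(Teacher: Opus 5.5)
\textbf{There is a genuine gap: the connected-sum case is not proved.} Your prime case is essentially the paper's case (I): a spherical manifold has cyclic $H_1$, which contradicts Corollary~\ref{cor:homology}. Step~2 has small inaccuracies that do not affect this:
\begin{itemize}
\item Bases $S^2(2,2,n)$ and $S^2(2,3,4)$ give $H_1$ of even order, so they simply cannot occur for a knot.
\item For $S^2(2,3,3)$ two indices are divisible by $3$, so your stated reason is wrong. The $3$-part is still cyclic, e.g.\ by a mod-$3$ rank count.
\item The equivariant sphere theorem detour is unnecessary.
\end{itemize}
In the connected-sum case you correctly factor the epimorphism through $\pi_1^{\orb}(S^2(2,3,3,3,3))$. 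After that you only gesture at a ``Kurosh/Grushko-type analysis'', and two concrete steps are missing.
\begin{itemize}
\item You note that aspherical summands must be excluded but never exclude them. The orbifold group is generated by torsion elements and surjects onto each free factor. So no factor is torsion-free, and a prime rational homology sphere with torsion in $\pi_1$ has finite $\pi_1$ (Hempel, Cor.~9.9).
\item The abelianization $(\Z/3\Z)^3$ of the orbifold group forces $H_1(\Sigma_2(K_D);\Z)\cong(\Z/3\Z)^3$. Combined with cyclic $H_1$ of the spherical summands and the Grushko bound $n_1+2n_2\le 4$, this gives $\Sigma_2(K_D)\cong L(3,1)\sharp L(3,1)\sharp M$.
\end{itemize}

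\textbf{Your proposed endgame cannot work.} Comparing with the ``$3$-primary homology requirement'' does eliminate $\Z/3\Z\oplus\Z/9\Z$. But $(\Z/3\Z)^3$ is exactly the abelianization of $\pi_1^{\orb}(S^2(2,3,3,3,3))$, so no homological obstruction remains, and the contradiction has to be non-abelian. The paper argues as follows:
\begin{itemize}
\item It obtains an epimorphism $\pi_1^{\orb}(S^2(3,3,3,3))\twoheadrightarrow \Z/3\Z*\Z/3\Z*\Z/3\Z$, since the order-$2$ generator must die in a target with no $2$-torsion.
\item $\pi_1^{\orb}(S^2(3,3,3,3))$ is generated by three elements of order $3$, so there is also an epimorphism in the opposite direction.
\item $\Z/3\Z*\Z/3\Z*\Z/3\Z$ is Hopfian, so the composite self-surjection is an isomorphism. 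Hence $\pi_1^{\orb}(S^2(3,3,3,3))\cong\Z/3\Z*\Z/3\Z*\Z/3\Z$.
\item This is impossible: the former is a cocompact Fuchsian group and hence one-ended, while the latter is virtually free.
\end{itemize}
Without this step, or an explicit Kurosh-subgroup analysis of the images of $c_2,\dots,c_5$ that you actually carry out, your proof does not close.
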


\begin{proof}
Suppose, to the contrary, that $\Sigma_2(K_D)$ is not aspherical.
Since it is an orientable rational homology sphere, it is either (I) an irreducible $3$-manifold with finite fundamental group or (II) a non-trivial connected sum.
In the case (I), $\Sigma_2(K_D)$ is a Seifert $3$-manifold with finite fundamental group by the orbifold theorem (see \cite{BoPo01}). 
Hence, it is a lens space or a Seifert fibered $\Z/2\Z$-homology sphere with base $S^2(2, 3, 3)$ or $S^2(2, 3, 5)$ by \cite[Theorem~2(i), (iii), (v) in Section~6.2]{Orl72}. 
Then, $H_1(\Sigma_2(K_D); \Z)$ is cyclic, which contradicts Corollary~\ref{cor:homology}.

In the case (II), $\pi_1(\Sigma_2(K_D))$ is a non-trivial free product since $S^3$ is the only closed $3$-manifold whose fundamental group is trivial (see \cite{MoTi07} for example).
By Proposition~\ref{prop:orbifold}, there is an epimorphism $\varphi \colon \pi_1(\Sigma_2(K)) \twoheadrightarrow \pi_1(\Sigma_2(K_D))$.
Since a non-trivial free product is centerless, $\varphi$ factors through an epimorphism $\bar{\varphi} \colon \pi_{1}^\orb(S^2(2, 3, 3, 3, 3)) \twoheadrightarrow \pi_1(\Sigma_2(K_D))$. 
So $\pi_{1}^\orb(S^2(2, 3, 3, 3, 3))$ surjects onto each factor of the free product $\pi_1(\Sigma_2(K_D))$. 
Since $\pi_{1}^\orb(S^2(2, 3, 3, 3, 3))$ is generated by torsion elements, no factor can be torsion-free. 
Hence, each factor is the fundamental group of a prime rational homology sphere with torsion elements. 
By \cite[Corollary~9.9]{Hem04}, it must be finite, and the manifold is a lens space or Seifert fibered with base the orbifold $S^2(2, 3, 3)$ or $S^2(2, 3, 5)$.

Therefore, $\Sigma_2(K_D)$ is a connected sum of $n_1$ lens spaces and $n_2$ Seifert fibered manifolds over $S^2(2, 3, 3)$ or $S^2(2, 3, 5)$ with $n_1 + 2n_2 \leq 4$ since $\pi_{1}^\orb(S^2(2, 3, 3, 3, 3))$ is generated by four elements of order $3$ and there is an epimorphism $\bar{\varphi} \colon \pi_{1}^\orb(S^2(2, 3, 3, 3, 3)) \twoheadrightarrow \pi_1(\Sigma_2(K_D))$.
The map $\bar{\varphi}$ induces an epimorphism $\bar{\varphi}' \colon\pi_{1}^\orb(S^2(2, 3, 3, 3, 3))^{\ab} \twoheadrightarrow H_1(\Sigma_2(K_{D}); \Z)$.
It follows from $\pi_{1}^\orb(S^2(2, 3, 3, 3, 3))^{\ab} \cong (\Z/3\Z)^3$ and Corollary~\ref{cor:homology} that $H_1(\Sigma_2(K_D); \Z) \cong (\Z/3\Z)^3$. 
Hence, $\Sigma_2(K_D)$ must be the connected sum of at least three manifolds with finite fundamental group since these manifolds have cyclic first homology groups. 
Then, the only possibility for $\Sigma_2(K_D)$ is that $\Sigma_2(K_D) \cong L(3,1) \sharp L(3,1)\sharp M$, where $\pi_1(M)$ is finite and $H_1(M; \Z) \cong \Z/3\Z$. 
Therefore, we have an epimorphism $\pi_1(\Sigma_2(K_D)) \twoheadrightarrow \Z/3\Z \ast \Z/3\Z \ast \Z/3\Z$.
Composing it with $\bar{\varphi}$, one gets an epimorphism $\psi \colon \pi_{1}^\orb(S^2(2, 3, 3, 3, 3)) \twoheadrightarrow \Z/3\Z \ast \Z/3\Z \ast \Z/3\Z$.
The map $\psi$ factors through an epimorphism $\bar{\psi} \colon \pi_{1}^\orb(S^2(3, 3, 3, 3)) \twoheadrightarrow \Z/3\Z \ast \Z/3\Z \ast \Z/3\Z$ because the elements of order $2$ in $\pi_1^\orb(S^2(2, 3, 3, 3, 3))$ are killed by $\psi$.
Since $\pi_{1}^\orb(S^2(3, 3, 3, 3))$ is generated by three elements of order $3$, there is an epimorphism $\Z/3\Z \ast \Z/3\Z \ast \Z/3\Z \twoheadrightarrow \pi_1^\orb(S^2(3, 3, 3, 3))$.
Composing this epimorphisms with $\bar{\psi}$, one gets an epimorphism from $\Z/3\Z \ast \Z/3\Z \ast \Z/3\Z$ to itself, which is an isomorphism since $\Z/3\Z \ast \Z/3\Z \ast \Z/3\Z$ is hopfian.
This implies that $\pi_1^\orb(S^2(3, 3, 3, 3))$ is isomorphic to $\Z/3\Z \ast \Z/3\Z \ast \Z/3\Z$, which is impossible.
\end{proof}

\section{The aspherical case: proof of Proposition~\ref{prop:aspherical}}
\label{sec:aspherical}

In this section, we provide the proof of Proposition~\ref{prop:aspherical} by showing Proposition~\ref{prop:seifertdegreezero} that there is no epimorphism $f_\ast \colon \pi_1(M) \to \pi_1(N)$ with $\deg f = 0$, where $M$ is the Seifert manifold $V(0; \frac{9}{2}; \frac{1}{3}, -\frac{1}{3}, \frac{1}{3}, -\frac{1}{3}, \frac{9}{2})$ and $N$ is an aspherical manifold. 
Note here that the Seifert manifold $M$ is irreducible with infinite fundamental group, and thus it is aspherical.
To analyze such a map $f$, we use arguments analogous to the proof by Reid, Wang, and Zhou in \cite[Theorem~2.1]{RWZ02}. 

\begin{proposition}\label{prop:seifertdegreezero} 
An orientable Seifert manifold $M$ with orbifold base $B_M = S^2(2, 3, 3, 3, 3)$ does not admit any $\pi_1$-surjective map $f\colon M \to N$ of degree zero, onto an orientable closed aspherical $3$-manifold $N$.
\end{proposition}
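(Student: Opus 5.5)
We split according to the image of the regular fiber. Let $h\in\pi_1(M)$ be the class of a regular fiber. It generates an infinite cyclic normal subgroup, which is central since $M$ is orientable with orientable base. We have $\pi_1(M)/\langle h\rangle\cong\pi_1^{\orb}(S^2(2,3,3,3,3))$, and the latter is generated by the five elliptic elements corresponding to the cone points. We argue by contradiction, assuming $f\colon M\to N$ is $\pi_1$-surjective of degree zero with $N$ closed, orientable and aspherical. In particular $\pi_1(N)$ is torsion-free and nontrivial.

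\emph{Case $f_*(h)=1$.} Then $f_*$ factors through an epimorphism $\pi_1^{\orb}(S^2(2,3,3,3,3))\twoheadrightarrow\pi_1(N)$. Each elliptic generator has finite order, so its image is a torsion element of the torsion-free group $\pi_1(N)$ and hence is trivial. Thus $\pi_1(N)=1$, which contradicts asphericity of a closed $3$-manifold. This is where the specific shape of $B_M$ is used: its underlying space is $S^2$, so the orbifold group is generated by torsion.

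\emph{Case $f_*(h)\neq1$.} Since $f_*$ is surjective, $f_*(h)$ is a nontrivial central element of $\pi_1(N)$, and it has infinite order. By the Seifert fiber space theorem (Scott, Gabai, Casson--Jungreis), the closed aspherical manifold $N$ is Seifert fibered, with $\langle f_*(h)\rangle$ of finite index in the fiber subgroup $\langle h_N\rangle$. We then argue as in \cite[Theorem~2.1]{RWZ02}.
\begin{enumerate}
\item Homotope $f$ to a fiber-preserving map. It sends $h$ to $h_N^{k}$ with $k\neq0$ and induces an orbifold map $\bar f\colon B_M\to B_N$.
\item Show that $\bar f$ induces a surjection $\pi_1^{\orb}(B_M)\twoheadrightarrow\pi_1^{\orb}(B_N)$, obtained by quotienting by the fiber subgroups. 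For this we need $\langle h_N\rangle$ to be the full center, or at least normal and infinite cyclic, which holds because $N$ is aspherical, so $B_N$ is a Euclidean or hyperbolic $2$-orbifold.
\item Use the degree formula $\deg f = k\cdot\deg\bar f$. Since $\deg f=0$ and $k\neq0$, this gives $\deg\bar f=0$.
\end{enumerate}

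The remaining step, which I expect to be the main obstacle, is to show that a closed aspherical $2$-orbifold group cannot be the image of a degree-zero $\pi_1$-surjective orbifold map. The plan is as follows. Choose a torsion-free finite-index subgroup of $\pi_1^{\orb}(B_N)$, which exists by Selberg's lemma. It corresponds to a closed orientable surface $F_N$ of genus $\geq1$. Pull this subgroup back to get a finite orbifold cover of $B_M$, and then pass further to a manifold cover $F_M$. This yields a map $F_M\to F_N$ of degree zero whose image on $\pi_1$ has finite index, hence whose image in $H_1(F_N;\Q)$ has finite index, i.e. is all of $H_1(F_N;\Q)$. The map induced by a degree-zero map of closed oriented surfaces kills the top class, so $\bar f^*(a\cup b)=0$ for all $a,b\in H^1(F_N;\Q)$. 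On the other hand, injectivity of $\bar f^*$ on $H^1$ (dual to surjectivity on rational $H_1$) together with the nondegeneracy of the cup product pairing on $F_M$ and naturality should give a contradiction. The delicate point is that naturality alone only gives $\bar f^*a\cup\bar f^*b=0$, which is not yet absurd. I would therefore first try the RWZ argument: a degree-zero map of surfaces is homotopic to one factoring through a graph (via the pinch map). Its image on $\pi_1$ is then contained in the image of a free group, so a finite-index subgroup of the surface group $\pi_1(F_N)$ would be free, which is impossible. Finally, I would also check the Euclidean base case and the case where $f_*(h)$ lies only in a finite-index subgroup of the fiber group, in the same way.
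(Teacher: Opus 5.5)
Your case $f_*(h)=1$ and steps 1--3 of the other case essentially match the paper (Lemmas~\ref{lem:fiber} and \ref{lem:missingfiber}). However, the claim you then reduce to is false, so there is a genuine gap.

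Degree-zero $\pi_1$-surjective maps onto closed aspherical surfaces and $2$-orbifolds do exist. Pinch $F_{2g}=F_g\sharp F_g$ along the separating curve onto $F_g\vee F_g$. Map one wedge summand by the identity and the other by an orientation-reversing homeomorphism. The resulting map $F_{2g}\to F_g$ has degree $1-1=0$ and is $\pi_1$-surjective.

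The same fold gives $S^2(3,3,3,3,3,3)\to S^2(3,3,3)$, with underlying space $S^2$ and Euclidean target. Crossing the surface fold with $S^1$ gives $F_{2g}\times S^1\to F_g\times S^1$. This is a fiber-preserving, degree-zero, $\pi_1$-surjective map between aspherical Seifert manifolds with $f_*(h)=h_N$. This folding is exactly the mechanism of Proposition~\ref{prop:orbifold}.

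Both of your attempted contradictions fail on this example.
\begin{enumerate}
\item The cup-product argument only shows that $\bar f^*H^1(F_N;\Q)$ is isotropic, which it is here.
\item A degree-zero map is indeed homotopic to one missing a point, so it factors through $F_N\setminus\{\mathrm{pt}\}\simeq\bigvee S^1$. But $\pi_1(F_N\setminus\{\mathrm{pt}\})\to\pi_1(F_N)$ is onto, so the image of a free group can be all of $\pi_1(F_N)$. Nothing forces a finite-index subgroup of a surface group to be free.
\end{enumerate}
Put differently: once you leave the first case you never use the cone-point orders $(2,3,3,3,3)$. Without them the statement is false.

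The paper keeps your ``miss a point'' idea but exploits it inside $M$, not just in the base.
\begin{enumerate}
\item By Kneser's theorem on the punctured bases, the fiber-preserving $f$ is homotoped to miss a fibered neighborhood $V_0$ of a regular fiber of $N$.
\item $N\setminus\Int V_0$ is cut along saturated annuli into fibered solid tori. The preimages of these annuli are made into incompressible tori in $M$. The family is non-empty, since otherwise $f_*(\pi_1(M))$ would be cyclic. No torus is horizontal, so all are vertical.
\item For such a torus $T=\partial S_1=\partial S_2$, the restriction $f|_T$ kills a section $s$. Hence $f_*|_{\pi_1(S_i)}$ factors through $\pi_1(S_i(s))$. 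This group must be infinite, because $h_M\in\pi_1(S_i)$ maps to $h_N^d$ with $d\neq0$.
\item Since $T$ is essential, it splits the five cone points as $2+3$. So the fillings are Seifert over $S^2(m,n)$ and $S^2(p,q,r)$ with $\{m,n,p,q,r\}=\{2,3,3,3,3\}$.
\item If $2\in\{m,n\}$, the first filling has finite $\pi_1$ (a lens space or $S^3$). Otherwise $\{p,q,r\}=\{2,3,3\}$ and the second filling has finite $\pi_1$. Either way this is a contradiction.
\end{enumerate}
This cone-point analysis is the step your plan needs in place of the general surface claim.
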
 

Throughout this section, let $f \colon M \to N$ be a $\pi_1$-surjective map of degree zero.
Assume that $M$ is a Seifert fibered rational homology sphere whose orbifold base $B_M$ is $S^2(2, 3, 3, 3, 3)$ and that $N$ is orientable, closed, and aspherical.

\begin{lemma}\label{lem:fiber} 
There is a Seifert fibration on $N$ whose orbifold base $B_N$ has underlying space $S^2$ and at least three singular points.
Moreover, the induced epimorphism $f_{\ast} \colon \pi_1(M) \to \pi_1(N)$ satisfies $f_{\ast}(h_M) = h_N^d$ with $d \neq 0$, where $h_M \in \pi_1(M)$ and $h_N \in \pi_1(N)$ correspond to regular fibers of the Seifert fibrations of $M$ and $N$, respectively.
\end{lemma}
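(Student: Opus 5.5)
Following the strategy of Reid--Wang--Zhou \cite[Theorem~2.1]{RWZ02}, I will first locate where the fiber $h_M$ goes, and then recover a Seifert structure on $N$ from the fact that this image is central.

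\emph{Step 1: $f_*(h_M)$ is central and nontrivial.} The element $h_M$ generates an infinite cyclic normal subgroup of $\pi_1(M)$; since $B_M$ is orientable and $M$ is orientable, it is in fact central. Because $f_*$ is surjective, $f_*(h_M)$ is central in $\pi_1(N)$.

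Suppose $f_*(h_M)=1$. Then $f_*$ factors through $\pi_1^{\orb}(S^2(2,3,3,3,3))$, which is generated by torsion elements. But $\pi_1(N)$ is torsion-free, since $N$ is aspherical. Each torsion generator therefore maps to $1$, so $\pi_1(N)$ is trivial. This is impossible for a closed aspherical $3$-manifold.

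\emph{Step 2: a Seifert structure on $N$.} Now $\pi_1(N)$ is infinite with nontrivial center. By the Seifert fiber space theorem (Casson--Jungreis, Gabai, Scott), $N$ is Seifert fibered with $h_N$ generating the center, up to the flat exceptions where the center is larger. I handle those exceptions as follows. $H_1(N)$ is a quotient of $H_1(M)$, hence finite, so $N$ is a rational homology sphere. This excludes $T^3$ and the other flat manifolds with $b_1>0$.

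Since the center is infinite cyclic, generated by $h_N$ (or one can choose the fibration so that this holds), we get $f_*(h_M)=h_N^d$ with $d\neq 0$. The base $B_N$ is orientable, because the center is central; with finite $H_1$ its underlying surface must be $S^2$.

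\emph{Step 3: singular points of $B_N$.} Passing to the quotient by the fiber, $f_*$ induces an epimorphism
\[
\pi_1^{\orb}(B_M)\twoheadrightarrow \pi_1(N)/\langle h_N\rangle=\pi_1^{\orb}(B_N),
\]
because $\langle h_N^d\rangle$ has finite index in $\langle h_N\rangle$, although the induced map is a priori only onto $\pi_1(N)/\langle h_N^d\rangle$ followed by the projection. If $B_N$ had at most two cone points, then $\pi_1^{\orb}(B_N)$ would be cyclic and $N$ would be a lens space, with finite $\pi_1$. That contradicts asphericity, so $B_N$ has at least three singular points.

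The main obstacle is Step 2. One has to justify carefully that the center is exactly $\langle h_N\rangle$ and deal with the flat or $Nil$ cases where the Seifert fibration is not unique. There I would pick the fibration whose fiber generates the center, using that $N$ is a rational homology sphere.
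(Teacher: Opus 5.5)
Your proposal is correct and follows essentially the same route as the paper: $\pi_1^{\orb}(B_M)$ is generated by torsion while $\pi_1(N)$ is torsion-free, so $f_\ast(h_M)\neq 1$ is central. The Seifert fiber space theorem, together with the standard fact that the center of $\pi_1(N)$ is exactly the fiber subgroup, then gives $f_\ast(h_M)=h_N^d$ with $d\neq 0$, and asphericity forces at least three cone points. The paper simply cites that standard fact from Hempel and Jaco--Shalen, and the exceptions you worry about do not arise: Nil manifolds have unique Seifert fibrations, and the only orientable flat rational homology sphere, the Hantzsche--Wendt manifold, has trivial center, so your Step~1 already excludes it. The one minor deviation is that you get $|B_N|=S^2$ from orientability of $B_N$ plus finiteness of $H_1(N)$ (which you correctly deduce from $H_1(M)$), whereas the paper uses the induced surjection $\pi_1(|B_M|)\twoheadrightarrow\pi_1(|B_N|)$ with $|B_M|=S^2$; both work.
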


\begin{proof}
Since the orbifold base $B_M$ of $M$ is orientable, $\pi_1(M)$ contains an infinite cyclic center $Z_M = \langle h_M \rangle$ generated by the regular fiber.
Moreover, the orbifold fundamental group $\pi_{1}^\orb (B_M) = \pi_1(M)/Z_M$ is generated by torsion elements since $\vert B_M \vert = S^2$.
Suppose, to the contrary, that $f_{\ast}(h_M) =1 \in \pi_1(N)$. 
Then the epimorphism $f_{\ast}$ factors through $\pi_{1}^\orb(B_M)$ which is generated by torsion elements. 
Since $N$ is aspherical, $\pi_1(N)$ is torsion-free, which is a contradiction.
Therefore, $f_{\ast}(h_M)$ is a non-trivial element in the center of $\pi_1(M)$.

It follows from \cite[Theorem~1.1]{CaJu94} or \cite[Corollary~2]{Gab92} that $N$ admits a Seifert fibration with an orbifold base $B_N$ and such that the infinite cyclic subgroup $Z_N = \langle h_N \rangle$ generated by the regular fiber is central. 
Since $N$ is an aspherical rational homology sphere, $Z_N$ is the center of $\pi_1(N)$ (see \cite[Theorem~12.10]{Hem04} or \cite[Proposition~II.4.7]{JaSh79}). 
Hence, $f_{\ast}(h_M)\neq 1$ belongs to  $Z_N = \langle h_N \rangle$, and $f_{\ast}(h_M) = h_N^d$ with $d \neq 0$. 
Since $f_{\ast} \colon \pi_1(M) \twoheadrightarrow \pi_1(N)$ sends $Z_M$ to $Z_N$, it induces an epimorphism $\bar{f}_{\ast} \colon \pi_{1}^\orb(B_M) \twoheadrightarrow \pi_{1}^\orb(B_N)$. 
The map $\bar{f}_{\ast}$ induces an epimorphism $\vert \bar{f}_{\ast} \vert \colon \pi_1(\vert B_M \vert) \twoheadrightarrow \pi_1(\vert B_N \vert)$. 
Therefore, $\pi_1(\vert B_N \vert)= \{1\}$ and $\vert B_N \vert = S^2$.
Moreover, the orbifold base $B_N$ has at least three singular points by the hypothesis that $N$ is aspherical. 
\end{proof}

The proof of the next lemma appears at the beginning of \cite[Step~(2)]{RWZ02}.
For the sake of completeness, we give its proof.

\begin{lemma}\label{lem:missingfiber}
For $3$-manifolds $M$ and $N$ in Proposition~\ref{prop:seifertdegreezero}, the $\pi_1$-surjective map $f \colon M \to N$ of degree zero can be homotoped to a fiber-preserving map which misses a fibered tubular neighborhood $V_0$ of a regular fiber of $N$.
\end{lemma}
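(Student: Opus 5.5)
We first make $f$ fiber-preserving, and then use degree zero to push it off a regular fiber.

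\emph{Step 1: fiber-preserving map.} By Lemma~\ref{lem:fiber}, $N$ is Seifert fibered with base $B_N$, $|B_N|=S^2$ and at least three cone points. Moreover $f_\ast(h_M)=h_N^d$ with $d\neq 0$. Since $N$ is aspherical and $f_\ast$ sends the fiber subgroup $Z_M$ into $Z_N$, we apply the standard homotopy results for maps between Seifert manifolds (Scott, or Jaco--Shalen's characterization via $\pi_1$). These say that a map which is injective on the fiber subgroup up to finite index and sends it into the center is homotopic to a fiber-preserving map. If this is not directly available, we construct the map ourselves. Decompose $M$ as a circle bundle over a punctured sphere $B_M^0$ union fibered solid tori. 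Using asphericity of $N$, any homomorphism is realized by a map, and maps are determined up to homotopy by the induced homomorphism up to conjugacy. So it suffices to build a fiber-preserving map $g$ with $g_\ast=f_\ast$. We take a lift of $\bar f_\ast\colon\pi_1^{\orb}(B_M)\to\pi_1^{\orb}(B_N)$ to an orbifold map $\bar g\colon B_M\to B_N$; this exists because both bases are good $2$-orbifolds, each cone generator maps to a torsion element, hence to a conjugate of a power of a cone generator, and so the map can be realized cone point by cone point. We then cover $\bar g$ by a fiber map of degree $d$ on fibers, adjusting the section obstruction using the central element $h_N$. The result is $g$ fiber-preserving with $g\simeq f$, and it induces an orbifold map $\bar g\colon B_M\to B_N$.

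\emph{Step 2: degree zero forces $\bar g$ to have degree zero.} For a fiber-preserving map one has $\deg g = d\cdot\deg\bar g$, where $\deg\bar g$ is the degree of the map of underlying surfaces $S^2\to S^2$. This follows by comparing the preimage of a generic regular fiber: over a regular value $y$ of $\bar g$, the fiber above $y$ has preimage the union of fibers over $\bar g^{-1}(y)$, each mapped with degree $d$, and the signs agree with the local degrees of $\bar g$. Since $\deg g=\deg f=0$ and $d\neq 0$ by Lemma~\ref{lem:fiber}, we get $\deg\bar g=0$.

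\emph{Step 3: missing a fiber.} A degree-zero map of $S^2$ to itself is homotopic to a non-surjective map. We homotope $\bar g$ relative to small disks around the cone points of $B_M$, keeping the orbifold structure, so that it misses a small disk $D_0$ around a regular point $y_0\in B_N$ away from the cone points and from the images of the cone points. Concretely, on the complement of the cone neighborhoods, $\bar g$ restricted to $\bar g^{-1}(D_0)$ consists of disks with local degrees summing to zero. We pair them by arcs and cancel, pushing the image off $D_0$. Since $\bar g$ is smooth generic, the cancellation can be done away from cone points. We cover this base homotopy by a fiber-preserving homotopy of $g$, which is possible because over $B_N\setminus\{\text{cone points}\}$ the fibration is locally a product. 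The resulting map misses $V_0=p_N^{-1}(D_0)$.

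The main obstacle is Step 3's cancellation. It needs care that the covering homotopy extends over the whole circle bundle, i.e.\ that the Euler class obstruction vanishes. We handle this by working in $N\setminus V_0$, where the bundle over the punctured base is trivial, so lifts exist after adjusting by multiples of the fiber.
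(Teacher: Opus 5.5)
\textbf{Verdict: the overall plan matches the paper's, but Step~3 has a genuine gap.} Your outline is the paper's: homotope $f$ to be fiber-preserving, use $\deg f=d\cdot\deg\bar g$ with $d\neq 0$ to get a degree-zero base map, push the base map off a disk, then lift. Steps~1--2 are fine, provided you give a precise reference for Step~1. The paper uses uniqueness of the Seifert fibrations together with the $S^1$-action argument in the proof of Rong's Proposition~2.4.

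\textbf{Why the cancellation cannot be done on the closed sphere.} A homotopy of base maps can be covered by a fiber-preserving homotopy only if it never sweeps a regular point of $B_M$ across a cone point of $B_N$. Over a small disk whose boundary winds once around a cone point of order $\alpha\geq 2$ there is no fiber-preserving map. The reason: on the boundary torus of the fibered solid torus, the boundary loop lifts to a class $s+mh_N$ ($s$ a section). This class is nonzero in $\pi_1\cong\Z$ of the solid torus, because the meridian is $\alpha s+\nu h_N$. On the closed sphere the cancellation is automatic only because $S^2\setminus\{y_0\}$ is contractible, and that homotopy may well cross cone points.

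\textbf{Why pairing preimages does not settle it.} The cancellation must therefore be a proper homotopy of $\bar g\colon B_M^0=\bar g^{-1}(B_N^0)\to B_N^0$. Here $B_N^0$ is $S^2$ minus small disks around the cone points of $B_N$ and around the images of the cone points of $B_M$. It is a planar surface with free $\pi_1$ of rank at least $2$. In $B_N^0$, the Hopf move ``join a $+$ and a $-$ preimage disk of $D_0$ by an arc $\lambda$ and cancel'' can be performed only if the loop $\bar g(\lambda)$ is null-homotopic in $B_N^0$. Nothing in your argument produces such an arc. Changing $\lambda$ alters $\bar g(\lambda)$ only by elements of $\bar g_\ast\pi_1(B_M^0)$, which need not be all of $\pi_1(B_N^0)$. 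Genericity of $\bar g$ does not help either.

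\textbf{The missing ingredient.} You need Kneser's theorem (see also Skora): a proper degree-zero map between compact surfaces is properly homotopic to a map missing a disk. The paper proceeds as follows.
\begin{enumerate}
\item Deform $f$ so that $f^{-1}(\Sigma_N)$ is a union of fibers.
\item Remove a regular neighborhood $W_0$ of $\Sigma_N\cup f(\Sigma_M)$ and its preimage.
\item Apply Kneser's theorem to the proper degree-zero map $B_M^0\to B_N^0$.
\item Since $M^0\to B_M^0$ and $N^0\to B_N^0$ are trivial circle bundles, the homotopy lifts, and the resulting map extends over $f^{-1}(W_0)$.
\end{enumerate}

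\textbf{Your closing paragraph.} It misidentifies the obstacle. $N\setminus V_0$ is not a trivial circle bundle, since it still contains all the exceptional fibers. The obstruction is not a global Euler class but the local one at the cone points described above.
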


\begin{proof}
Since $M$ and $N$ are orientable aspherical manifolds, there is a unique Seifert fibration on $M$ and on $N$ up to isotopy, with an orientable base $S^2$ and at least three singular points by \cite{Scott_1985} and \cite{Wal67}.
Moreover, these Seifert fibrations are induced by effective, fixed-point-free, $S^1$-actions on $M$ and $N$ (see \cite[Theorem~2.1]{JaNe83}). 
Then, the proof of \cite[Proposition~2.4]{Ron93} shows that the map $f \colon M \to N$ can be homotoped into an equivariant fiber-preserving map with respect to these $S^1$-actions since $f_{\ast}(h_M) = h_N^d$ with $d \neq 0$ and the Seifert fibration on $N$ is unique up to isotopy. 
The proof uses a round handle decomposition of $M$ and \cite[Lemmas~2.2 and 2.3]{Ron93}, but it does not need any assumption on the degree of $f$.

Hence, after the homotopy, one gets a $\pi_1$-surjective, fiber-preserving map $M\to N$ of degree zero, which is again denoted by $f$. 
Let $\Sigma_M$ and $\Sigma_N$ be the sets of singular fibers of $M$ and $N$. 
One can further deform $f$ such that $f^{-1}(\Sigma_N)$ is a finite collection of fibers of $M$.
Let $W_0$ be a regular neighborhood of $\Sigma_N \cup f(\Sigma_M)$ in $N$. 
Then $f$ induces a proper, fiber-preserving map of degree zero:
\[
f_{\vert} \colon M^{0} = M \setminus \Int\big(f^{-1}(W_0)\big) \to 
N^0 = N \setminus \Int(W_0).
\]

Since $f$ is of degree zero, the restriction $f_{\vert} \colon M^0 \to N^0$ is of degree zero. 
The submanifolds $M^{0}$ and $N^0$ are trivial $S^1$-bundles over the punctured surfaces $B_{M}^{0}$ and $B_{N}^{0}$.
Since $f_{\ast}(h_M) = h_N^d$ with $d \neq 0$, $f_{\vert}$ induces a map $\bar{f}_{\vert} \colon B_{M}^{0} \to B_{N}^{0}$ of degree zero.
By Kneser's theorem~\cite{Kne30} (see also \cite{Sko87}), $\bar{f}_{\vert}$ can be properly homotoped to a map which misses a disk $D_0 \subset B_{N}^{0}$. 
This homotopy lifts to a fiber-preserving homotopy between $f_{\vert} \colon M^0 \to N^0$ and a fiber-preserving map $f'_{\vert} \colon M^0 \to N^0$ which misses a fibered tubular neighborhood $V_0$ of a regular fiber of $N^0$. 
Since $f_{\vert}$ and $f'_{\vert}$ are homotopic on $\partial M^0$, $f'_{\vert}$ extends to a fiber-preserving map $f' \colon M\to N$ which is homotopic to $f$ and misses $V_0$.
\end{proof}

\begin{proof}[Proof of Proposition~\ref{prop:seifertdegreezero}]
By Lemmas~\ref{lem:fiber} and \ref{lem:missingfiber}, there is a $\pi_1$-surjective, fiber-preserving map $f \colon M \to N$ such that $f(M) \subset N'$, where $N' = N \setminus \Int(V_0)$ and $V_0 \subset N$ is a fibered tubular neighborhood of a regular fiber of $N$.
Let $B_{N'} = B_N \setminus \Int(D_0)$ be the base of $N'$ with $D_0 \subset B_N$ being a disk and let $\pi' \colon N' \to B_{N'}$ be the quotient map onto the base.

Let $\overline{\mathcal{A}}$ be a finite collection of essential, disjoint, non-parallel and properly embedded arcs in $B_{N'}$ such that $A \cap \pi'(f(\Sigma_M) \cup \Sigma_N) = \emptyset$ for any $A\in \overline{\mathcal{A}}$ and $\overline{\mathcal{A}}$ cuts $B_{N'}$ into discal $2$-orbifolds (i.e., disks with at most one singular point). 
Such a collection $\overline{\mathcal{A}}$ is not empty because the orbifold $B_{N'}$ has at least three singular points, like $B_N$.
Then the preimages ${\mathcal{A}} = \pi'^{-1}(\overline{\mathcal{A}})$ give a finite collection of disjoint, essential, non-parallel, saturated, properly embedded annuli in $N'$ which miss $f(\Sigma_M) \cup \Sigma_N$, and ${\mathcal{A}}$ cuts $N'$ into Seifert fibered solid tori. 

One can deform $f$ via a homotopy so that each surface in $\mathcal{T} = f^{-1}(\mathcal{A})$ is incompressible in $M$ (see \cite[Lemma~6.5]{Hem04}).
The collection $\mathcal{T}$ of closed orientable surfaces is not empty.
Otherwise, $f(M)$ is contained in a Seifert fibered solid torus and $f_{\ast}(\pi_1(M))$ would be cyclic, which is impossible since $f_{\ast}(\pi_1(M)) = \pi_1(N)$.
No component of $\mathcal{T}$ can be isotopic to a surface $F$ transverse to the Seifert fibration of $M$ (i.e., horizontal). 
Otherwise, $F$ must be separating since $M$ is a rational homology sphere. 
Then, the restriction of $\pi \colon M \to B_M$ to $F$ induces an orbifold covering, and hence the base $B_M$ is non-orientable.
This is a contradiction. 
(One could also argue that the rational Euler number $e_0$ of $M$ is non-zero, while it is zero when there is a horizontal surface which gives a section of the map $\pi \colon M \to B_M$. 
One can also apply the proof of Case~1 of \cite[Lemma~3.5]{Ron93}. 
The argument here works even if $M$ is not a rational homology sphere.)

Hence, there is an isotopy of $f$ so that $\mathcal{T}$ is a non-empty finite collection of incompressible vertical tori in $M$ by \cite{Wal67}.
Let $T \in \mathcal{T}$,
then $T$ splits $M$ into two Seifert fibered submanifolds $S_1$ and $S_2$ with incompressible boundary $\partial S_1= \partial S_2 = T$. 
Since $f(T) \subset A \in \mathcal A$ and $f_{\ast}(h_M) = h_N^d$ with $d \neq 0$, the restriction $f_{\vert}\colon T \to A$ is homotopic to a fiber-preserving map which sends the base $S^1$ of the Seifert fibration on $T$ to the base $[0, 1]$ of the Seifert fibration on $A$. 
Hence, $\Ker({f_{\vert}}_{\ast})$ and the Seifert fiber on $T$ generate $\pi_1(T)$. 
It follows that $\Ker({f_{\vert}}_{\ast}) \cong \mathbb{Z}$ is generated by a simple closed curve on $T$ dual to the Seifert fiber, i.e., by a section $s$ of the Seifert fibration on $T$. 
Therefore, for each Seifert fibered submanifold $S_i$ ($i=1, 2$) with boundary $T$, the induced homomorphism $f_{\ast} \colon \pi_1(S_i) \to \pi_1(N)$ factors through $\pi_1(S_i(s))$, where $S_i(s)$ is the $3$-manifold obtained by Dehn filling the boundary $\partial S_i = T$ along the section $s$. 
Here, $S_i(s)$ is a closed Seifert fibered manifold such that the core of the glued solid torus is a regular fiber of $S_i(s)$, and hence $S_i(s)$ has the same number of singular fibers as $S_i$. 
Moreover, $f_{\ast}(\pi_1(S_i))$ must be infinite since $f_{\ast}(h_M) = h_N^d$ is of infinite order in $\pi_1(N)$ and belongs to $f_{\ast}(\pi_1(S_i))$. 
Since $f_{\ast} \colon \pi_1(S_i) \to \pi_1(N)$ factors through $\pi_1(S_i(s))$, $\pi_1(S_i(s))$ must be an infinite group for $i=1, 2$.

Recall that the orbifold base $B_M$ is $S^2(2, 3, 3, 3, 3)$. 
Note that the vertical torus $T$ projects to a simple closed curve $\gamma$ on $B_M$, which splits $B_M$ into two disks. 
Each disk contains at least two of the five singular points, otherwise $T$ bounds a fibered solid torus and is not incompressible. 
Hence, the only possible distribution of the singular points $\{2, 3, 3, 3, 3\}$ into the two disks is $(2, 3)$ and the bases of the Seifert fibered $3$-manifolds $S_1(s)$ and $S_2(s)$ must be $S^2(m, n)$ and $S^2(p, q, r)$ with $\{m, n, p, q, r\} = \{2, 3, 3, 3, 3\}$.
A closed, orientable, Seifert $3$-manifold with base $S^2(2, 3)$ has a finite fundamental group because it is a lens space or $S^3$.
Therefore, $\{m, n\} = \{3, 3\}$. 
It follows that $\{p, q, r\} = \{2, 3, 3\}$, which is not possible because a closed orientable Seifert fibered $3$-manifold with base $S^2(2, 3, 3)$ has a finite fundamental group, see \cite[Theorem~2(iii), Section~6.2]{Orl72}. 
This shows that the orbifold base $B_M$ cannot be $S^2(2, 3, 3, 3, 3)$ and finishes the proof of Proposition~\ref{prop:seifertdegreezero}.
\end{proof}


\end{document}